\patchcmd{\section}{\normalfont}{\normalfont\Large}{}{}
\patchcmd{\section}{\scshape}{\bfseries}{}{}
\renewcommand{\@secnumfont}{\bfseries}
\let\originalforall=\forall
\renewcommand{\forall}{\mathop{\vcenter{\hbox{\Large$\originalforall$}}}}
\let\originalexists=\exists
\renewcommand{\exists}{\mathop{\vcenter{\hbox{\Large$\originalexists$}}}}
\newcommand{\dziub}{\hspace{-4 pt}\vspace{6 pt}\hat{\phantom{i}}}
\newtheorem{thm}{Theorem}[section]
\newtheorem{cor}[thm]{Corollary}
\newtheorem{lem}[thm]{Lemma}
\newtheorem{prop}[thm]{Proposition}
\newtheorem{prob}[]{Problem}
\newtheorem{de}[thm]{Definiton}
\newtheorem{rem}[thm]{Remark}
\numberwithin{equation}{section}
\date{\today}
\title{Inversion problem in measure and Fourier--Stieltjes algebras}
\author{Przemys\l aw Ohrysko}
\address{Chalmers University of Technology and the University of Gothenburg}
\email{p.ohrysko@gmail.com}
\author{Mateusz Wasilewski}
\address{KU Leuven}
\email{mateusz.wasilewski@kuleuven.be}
\thanks{PO was supported by foundations managed by The Royal Swedish Academy of Sciences. MW was partially supported by  National Science Centre (NCN) grant 2016/21/N/ST1/02499, long term structural funding -- Methusalem grant of the Flemish Government -- and by European Research Council Consolidator Grant 614195 RIGIDITY.}
\begin{document}
\baselineskip=21pt
\begin{abstract}
In this paper we study the inversion problem in measure and Fourier--Stieltjes algebras from qualitative and quantitative point of view extending the results obtained by N. Nikolski in \cite{nik}.
\end{abstract}
\subjclass[2010]{Primary 43A05; Secondary 43A30.}

\keywords{Inverse, Measure Algebras, Fourier--Stieltjes algebras.}
\maketitle

\section{Introduction}
We are going to collect first some basic facts from Banach algebra theory and harmonic analysis in order to fix the notation (our main reference for Banach algebra theory is \cite{z}, for harmonic analysis check \cite{r}). For a commutative unital Banach algebra $A$, the Gelfand space of $A$ (the set of all multiplicative-linear functionals endowed with weak$^{\ast}$-topology) will be denoted by $\triangle(A)$ and the Gelfand transform of an element $x\in A$ is a surjection $\widehat{x}:\triangle(A)\rightarrow\sigma(x)$ defined by the formula: $\widehat{x}(\varphi)=\varphi(x)$ for $\varphi\in\triangle(A)$, where $\sigma(x):=\{\lambda\in\mathbb{C}:\mu-\lambda\delta_{0}\text{ is not invertible}\}$ is the spectrum of an element $x$.
Let $G$ be a locally compact Abelian group with its unitary dual $\widehat{G}$ and let $M(G)$ denote the Banach algebra of all complex-valued Borel regular measures equipped with the convolution product and the total variation norm. It is also  a $^{\ast}$-algebra with involution $\mu\rightarrow\widetilde{\mu}$ defined for any Borel set $E\subset G$ by $\widetilde{\mu}(E):=\overline{\mu(-E)}$. A measure $\mu\in M(G)$ is called hermitian, if $\mu=\widetilde{\mu}$ or, equivalently, its Fourier-Stieltjes transform is real-valued. The Fourier-Stieltjes transform will be treated as a restriction of the Gelfand transform to $\widehat{G}$. Note that we have a direct sum decomposition $M(G)=M_{c}(G)\oplus M_{d}(G)$ where $M_{c}(G)$ is the ideal of continuous (non-atomic) measures and $M_{d}(G)$ is the subalgebra of discrete (atomic) measures. For $\mu\in M(G)$ we will write $\mu=\mu_{c}+\mu_{d}$ with $\mu_{c}\in M_{c}(G)$ and $\mu_{d}\in M_{d}(G)$.

We recall the problem investigated by N. Nikolski in \cite{nik}, which will be our main point of interest.
\begin{prob}\label{pier}
Let $\mu\in M(G)$ satisfy $\|\mu\|\leq 1$ and suppose that $\inf_{\gamma\in\widehat{G}}|\widehat{\mu}(\gamma)|=\delta>0$. What is the minimal value of $\delta_{0}>0$ such that for any $\delta>\delta_{0}$ the measure $\mu$ is automatically invertible? What can be said about the norm of the inverse?
\end{prob}
It is clear that $\delta_{0}<\frac{1}{2}$ is not enough as the example of a measure $\mu:=\frac{1}{2}\delta_{0}+\frac{1}{2}\nu$ where $\nu$ is a probability measure with non-negative Fourier-Stieltjes transform satisfying $\sigma(\nu)=\overline{\mathbb{D}}$ shows (here $\delta_{0}$ is the Dirac delta at the point $0$). On the other hand, it was proved in \cite{nik} that any $\delta>\frac{1}{\sqrt{2}}$ does the job and moreover the norm of the inverse is bounded by $(2\delta^{2}-1)^{-1}$. For the readers convenience we will reprove this result now with a slightly simpler (but less general) approach. Let $\mu\in M(G)$ satisfy $\|\mu\|\leq 1$ and $\inf_{\gamma\in\widehat{G}}|\widehat{\mu}(\gamma)|=\delta>\frac{1}{\sqrt{2}}$. Then, by the generalization of Wiener lemma (see 5.6.9 in \cite{r}):
\begin{equation*}
(\mu\ast\widetilde{\mu})(\{0\})=\sum_{x\in G}|\mu(\{x\})|^{2}=\lim_{\alpha}\int_{\Gamma}\widehat{f_{\alpha}}(\gamma)|\widehat{\mu}(\gamma)|^{2}d\gamma\geq\delta^{2}>\frac{1}{2},
\end{equation*}
where $\{f_{\alpha}\}$ is a system of continuous positive-definite functions with compact supports subordinated to a neighborhood base $\{V_{\alpha}\}$ of $0$ with $f_{\alpha}(0)=1$.
The second ingredient is a version of Lemma 1.4.3 from \cite{nik} adapted to our situation.
\begin{lem}\label{latw}
Let $\mu\in M(G)$ and let $\mu=\lambda\delta_{0}+\nu$ where $\nu(\{0\})=0$ and $\frac{1}{2}<\delta\leq|\lambda|\leq \|\mu\|\leq 1$. Then $\mu$ is invertible and $\|\mu^{-1}\|\leq\frac{1}{2\delta-1}$.
\end{lem}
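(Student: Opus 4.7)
The plan is a Neumann series argument, exploiting the fact that the discrete piece $\lambda\delta_0$ dominates the remainder $\nu$ in norm as soon as $\delta>\tfrac12$.

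First I would record the norm decomposition. Since $\nu(\{0\})=0$, the measures $\lambda\delta_0$ and $\nu$ are mutually singular, so their total variations add:
\[
\|\mu\|=|\lambda|+\|\nu\|.
\]
Combined with the hypothesis $\|\mu\|\le 1$, this gives $\|\nu\|\le 1-|\lambda|$.

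Next I would factor $\mu=\lambda(\delta_0+\lambda^{-1}\nu)$ and check that $\|\lambda^{-1}\nu\|<1$. Indeed, using $|\lambda|\ge\delta>\tfrac12$ together with the previous display,
\[
\|\lambda^{-1}\nu\|=\frac{\|\nu\|}{|\lambda|}\le\frac{1-|\lambda|}{|\lambda|}<1,
\]
where the last inequality is exactly the content of $|\lambda|>\tfrac12$. The Neumann series then produces the inverse
\[
\mu^{-1}=\lambda^{-1}\sum_{n=0}^{\infty}(-\lambda^{-1}\nu)^{n},
\]
which converges in $M(G)$ since $M(G)$ is a Banach algebra with unit $\delta_0$.

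For the norm estimate I would bound termwise:
\[
\|\mu^{-1}\|\le\frac{1}{|\lambda|}\cdot\frac{1}{1-\|\lambda^{-1}\nu\|}=\frac{1}{|\lambda|-\|\nu\|}.
\]
Finally, using $\|\nu\|=\|\mu\|-|\lambda|\le 1-|\lambda|$ I would write $|\lambda|-\|\nu\|\ge 2|\lambda|-\|\mu\|\ge 2\delta-1$, yielding $\|\mu^{-1}\|\le(2\delta-1)^{-1}$.

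There is really no main obstacle here; the only point that must be handled carefully is the norm additivity $\|\mu\|=|\lambda|+\|\nu\|$, which relies on $\nu$ having no mass at $0$ and thus on mutual singularity of the two summands. Once this is in hand, the rest is a routine geometric-series estimate.
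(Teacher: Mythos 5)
Your proof is correct and is essentially the argument the paper has in mind: the paper states this lemma without proof, citing Lemma 1.4.3 of Nikolski, but later explicitly refers to inverting ``using the Neumann series (just like in Lemma \ref{latw})'', which is exactly your factorization $\mu=\lambda(\delta_{0}+\lambda^{-1}\nu)$ with $\|\lambda^{-1}\nu\|<1$. Your care with the mutual singularity of $\lambda\delta_{0}$ and $\nu$ (so that $\|\mu\|=|\lambda|+\|\nu\|$) is the one point that needs checking, and you handle it correctly.
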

By the lemma, $\|(\mu\ast\widetilde{\mu})^{-1}\|\leq\frac{1}{2\delta^{2}-1}$ and in order to finish the argument one has only to observe that $\mu^{-1}=(\mu\ast\widetilde{\mu})^{-1}\ast\widetilde{\mu}$.

However, the minimal value of $\delta_{0}>0$ seems to be unknown and the question on improving the aforementioned bounds is stated in \cite{nik}.

The first aim of this paper is to give a proof of the following fact (Theorem \ref{glop}): if $\mu\in M(G)$ satisfies $\|\mu\|\leq 1$ and $\inf_{\gamma\in\widehat{G}}|\widehat{\mu}(\gamma)|>\frac{1}{2}$ then $\mu$ is invertible, which is the final solution of the problem (proving $\delta_{0}=\frac{1}{2}$) -- in view of the previous discussion the result is sharp.

In the next part of this article we attack the problem of estimating the norm of the inverse. In addition to the upper bounds mentioned above, the paper \cite{nik} contains a result on the lower bound. We will not reproduce the whole discussion here but what is crucial for us is that one cannot hope for a bound better than $(2\delta-1)^{-1}$ and the norm-controlled inversion cannot hold for any $\delta\leq\frac{1}{2}$ for any infinite locally compact Abelian group. The most important result is Theorem \ref{nies} which states that under additional group-theoretic assumption the norm-controlled inversion holds true for $\delta>\frac{-1+\sqrt{33}}{8}\simeq 0,593$ - this improves the result of N. Nikolski ($\delta>\frac{1}{\sqrt{2}}\simeq 0,707$). Later, we show in Theorem \ref{nornz} that for measures with discrete parts supported on independent subsets of locally compact Abelian groups consisting of elements of infinite order we obtain the optimal bound. The section is concluded with a discussion of another special case -- real measures on groups of exponent two.
\\
The last, independent part of the paper is devoted to the study of analogous problems for Fourier-Stieltjes algebras. Particularly, we prove in Theorem \ref{fsa} that the qualitative inversion problem has a positive solution for any $\delta>\frac{1}{2}$. We also prove (Theorem \ref{jprzez}) the norm-controlled inversion for $\delta>\frac{1}{\sqrt{2}}$ in this context.

\section{Measure algebras}
\subsection{Qualitative result}
\begin{lem}\label{latwy}
Let $\mu\in M(G)$ satisfy $\|\mu\|\leq 1$ and suppose that $\inf_{\gamma\in\Gamma} |\widehat{\mu_{d}}(\gamma)|>\frac{1}{2}$. Then $\mu$ is invertible.
\end{lem}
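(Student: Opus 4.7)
The plan is to show that $|\varphi(\mu)| \geq 2\delta - 1 > 0$ for every $\varphi \in \triangle(M(G))$, where $\delta := \inf_{\gamma \in \widehat{G}} |\widehat{\mu_d}(\gamma)|$; by Gelfand theory this gives invertibility of $\mu$ in $M(G)$. The argument splits into controlling $\varphi$ on the discrete part $\mu_d$ and bounding the norm of the continuous part $\mu_c$.

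The key step is to upgrade the lower bound on $|\widehat{\mu_d}|$ from $\widehat{G}$ to the full Gelfand spectrum of $M_d(G)$. I would identify $M_d(G)$ with the group algebra $\ell^{1}(G_d)$ of $G$ equipped with the discrete topology, so that $\triangle(M_d(G))$ is the dual group $\widehat{G_d}$, which in turn coincides with the Bohr compactification $b\widehat{G}$ of $\widehat{G}$; in particular, $\widehat{G}$ sits as a dense subgroup of $\triangle(M_d(G))$. Since the Gelfand transform of $\mu_d$ is a continuous function on this compact space and satisfies $|\widehat{\mu_d}(\gamma)| \geq \delta$ on the dense subset $\widehat{G}$, continuity forces $|\psi(\mu_d)| \geq \delta$ for every $\psi \in \triangle(M_d(G))$. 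Now for an arbitrary $\varphi \in \triangle(M(G))$, the restriction $\varphi|_{M_d(G)}$ is multiplicative and nonzero (because $\varphi(\delta_0) = 1$), hence lies in $\triangle(M_d(G))$, and therefore $|\varphi(\mu_d)| \geq \delta$.

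For the continuous part, mutual singularity of $\mu_c$ and $\mu_d$ gives $\|\mu\| = \|\mu_c\| + \|\mu_d\|$, while $\|\mu_d\| \geq |\widehat{\mu_d}(0)| \geq \delta$ together with $\|\mu\| \leq 1$ forces $\|\mu_c\| \leq 1 - \delta$, so that $|\varphi(\mu_c)| \leq 1 - \delta$. Combining the two estimates,
$$|\varphi(\mu)| \geq |\varphi(\mu_d)| - |\varphi(\mu_c)| \geq \delta - (1-\delta) = 2\delta - 1 > 0,$$
and $\mu$ is invertible. The only nontrivial ingredient in the sketch is the identification $\triangle(M_d(G)) = b\widehat{G}$ together with the continuity/density argument that propagates the lower bound from $\widehat{G}$ to the full spectrum; once this is in hand, the rest reduces to the three-line estimate above.
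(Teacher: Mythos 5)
Your proposal is correct and follows essentially the same route as the paper: identify $\triangle(M_d(G))$ with $b\widehat G$, propagate the lower bound $|\widehat{\mu_d}|\geq\delta$ from the dense copy of $\widehat G$ to all characters, and observe that $\|\mu_c\|=\|\mu\|-\|\mu_d\|\leq 1-\delta<\tfrac12$ so the continuous part cannot cancel the discrete one. The only (harmless) differences are that you restrict each $\varphi\in\triangle(M(G))$ to $M_d(G)$ instead of invoking $\sigma(\mu_d)=\sigma_{M_d(G)}(\mu_d)$, and you argue directly via $|\varphi(\mu)|\geq 2\delta-1>0$ rather than by contradiction.
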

\begin{proof}
Recalling that $\sigma(\mu_{d})=\sigma_{M_{d}(G)}(\mu_{d})$ and $\triangle(M_{d}(G))=\triangle(l^{1}(G))=b\widehat{G}$ (the Bohr compactfication of $\widehat{G}$) we get
\begin{equation}\label{dys}
\forall_{\varphi\in\triangle(M(G))}|\widehat{\mu_{d}}(\varphi)|>\frac{1}{2}.
\end{equation}
Of course, $\frac{1}{2}< r(\mu_{d})\leq \|\mu_{d}\|$ so
\begin{equation}\label{con}
\|\mu_{c}\|=\|\mu\|-\|\mu_{d}\|<\frac{1}{2}.
\end{equation}
Suppose now, towards the contradiction, that $\mu$ is not invertible. Then $0\in\sigma(\mu)$ and there exists $\varphi_{0}\in\triangle(M(G))$ for which $\varphi_{0}(\mu)=0$. This gives $\varphi_{0}(\mu_{d})=-\varphi_{0}(\mu_{c})$ implying that $|\varphi_{0}(\mu_{d})|=|\varphi_{0}(\mu_{c})|$. But by (\ref{dys}) we have $|\varphi_{0}(\mu_{d})|>\frac{1}{2}$ and by (\ref{con}) we get $|\varphi_{0}(\mu_{c})|\leq r(\mu_{c})\leq \|\mu_{c}\|<\frac{1}{2}$ which is a contradiction.
\end{proof}
Here we need to cite the theorem of I. Glicksberg and I. Wik (see (2) in \cite{gw}).
\begin{thm}\label{dysk}
Let $\mu\in M(G)$. Then $\widehat{\mu_{d}}(\widehat{G})\subset\overline{\widehat{\mu}(\widehat{G})}$.
\end{thm}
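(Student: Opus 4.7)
The plan is first to reduce, by a character twist, to the case $\gamma_{0}=0$. Given $\gamma_{0}\in\widehat{G}$, replace $\mu$ by the modulated measure $\overline{\gamma_{0}}\cdot\mu$; one checks that $(\overline{\gamma_{0}}\cdot\mu)_{d}=\overline{\gamma_{0}}\cdot\mu_{d}$ and $\widehat{\overline{\gamma_{0}}\cdot\mu}(\gamma)=\widehat{\mu}(\gamma+\gamma_{0})$, so that $\widehat{(\overline{\gamma_{0}}\cdot\mu)_{d}}(0)=\widehat{\mu_{d}}(\gamma_{0})$ and the image $\widehat{\overline{\gamma_{0}}\cdot\mu}(\widehat{G})$ coincides with $\widehat{\mu}(\widehat{G})$. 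It therefore suffices to show that for every $\mu\in M(G)$ the total atomic mass $\mu_{d}(G)=\widehat{\mu_{d}}(0)$ lies in $\overline{\widehat{\mu}(\widehat{G})}$.

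The question then becomes whether there exists a net $\{\gamma_{\alpha}\}\subset\widehat{G}$ satisfying \emph{simultaneously} $\widehat{\mu_{d}}(\gamma_{\alpha})\to\mu_{d}(G)$ and $\widehat{\mu_{c}}(\gamma_{\alpha})\to 0$. Since $\mu_{d}$ is supported on a countable set $\{x_{k}\}\subset G$, the first condition is just $\gamma_{\alpha}(x_{k})\to 1$ for every $k$, i.e., convergence to $0$ in the Bohr topology of $\widehat{G}$ (the topology inherited from the embedding $\widehat{G}\hookrightarrow b\widehat{G}=\triangle(M_{d}(G))$). So the real content of the theorem is the assertion that \emph{every Bohr-neighborhood of $0$ in $\widehat{G}$ contains characters at which $|\widehat{\mu_{c}}|$ is arbitrarily small}.

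The natural input is the Plancherel-type identity already recalled in the introduction: for a continuous measure one has
$$(\mu_{c}\ast\widetilde{\mu_{c}})(\{0\})=\sum_{x\in G}|\mu_{c}(\{x\})|^{2}=0=\lim_{\alpha}\int_{\widehat{G}}\widehat{f_{\alpha}}(\gamma)|\widehat{\mu_{c}}(\gamma)|^{2}\,d\gamma,$$
so on average $|\widehat{\mu_{c}}|^{2}$ is zero. The main obstacle is the clash of topologies: $\widehat{\mu_{c}}$ is continuous only in the usual (LCA) topology of $\widehat{G}$, whereas the closeness condition on $\widehat{\mu_{d}}$ lives on the much coarser Bohr topology. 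Reconciling these requires showing that the smallness in average above can be localized inside an arbitrary Bohr-neighborhood of $0$, presumably by further twisting $\mu_{c}$ by characters and by a diagonal/compactness argument that interleaves finitely many Bohr-closeness conditions with one continuous-part estimate at each step, ultimately extracting a single net satisfying both requirements. This localization step is what I expect to be delicate and is essentially the heart of the Glicksberg--Wik argument.
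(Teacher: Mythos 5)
The paper does not actually prove this statement: it is imported wholesale as a known theorem of Glicksberg and Wik, with a citation to \cite{gw} and no argument. So there is no internal proof to measure yours against, and the only question is whether your attempt stands on its own. It does not, because it stops exactly where the theorem begins. Your reduction is correct: twisting by $\overline{\gamma_{0}}$ preserves the continuous/discrete decomposition and permutes the range of $\widehat{\mu}$, so it suffices to put $\widehat{\mu_{d}}(0)$ in $\overline{\widehat{\mu}(\widehat{G})}$, and for that it suffices to produce a net $\{\gamma_{\alpha}\}$ with $\gamma_{\alpha}(x_{k})\to 1$ on the countable support of $\mu_{d}$ while $\widehat{\mu_{c}}(\gamma_{\alpha})\to 0$. (A small imprecision: this is pointwise convergence on the support of $\mu_{d}$, which is weaker than convergence to $0$ in the Bohr topology, not equivalent to it; harmless, since you only need a sufficient condition.) But the existence of such a net is the entire content of the theorem, and you explicitly defer it, calling it ``delicate'' and ``the heart of the Glicksberg--Wik argument.'' A proof that labels its only nontrivial step as the part someone else proved is a reduction, not a proof.

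Moreover, the Wiener-type identity you invoke does not by itself close the gap. It tells you that the \emph{mean} of $|\widehat{\mu_{c}}|^{2}$ over all of $\widehat{G}$ vanishes, hence that $|\widehat{\mu_{c}}|$ is small \emph{somewhere}; it says nothing about where. To localize the smallness inside a prescribed set of the form $\{\gamma:|\gamma(x_{k})-1|<\varepsilon,\ k\leq K\}$ you need the additional quantitative input that such sets have positive density with respect to the invariant mean (equivalently, their closures in $b\widehat{G}$ have positive Haar measure), so that a non-negative weakly almost periodic function with zero mean cannot stay bounded away from $0$ on any of them. That positivity-of-density statement, and the verification that the mean-zero property of $|\widehat{\mu_{c}}|^{2}$ survives restriction to these sets, is the missing idea; until it is supplied, the clash of topologies you correctly identify remains unresolved.
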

A straightforward corollary of this theorem is the following observation.
\begin{cor}\label{dysdu}
Let $\mu\in M(G)$. Then $\inf_{\gamma\in\widehat{G}}|\widehat{\mu_{d}}(\gamma)|\geq \inf_{\gamma\in\widehat{G}}|\widehat{\mu}(\gamma)|$.
\end{cor}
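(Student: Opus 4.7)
The plan is essentially to unpack the inclusion given by Theorem \ref{dysk} and use continuity of the modulus. Set $\delta := \inf_{\gamma \in \widehat{G}} |\widehat{\mu}(\gamma)|$; the goal is to show that $|\widehat{\mu_d}(\gamma)| \geq \delta$ for every $\gamma \in \widehat{G}$, after which taking the infimum on the left gives the claim.

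Fix $\gamma \in \widehat{G}$. By Theorem \ref{dysk} the value $\widehat{\mu_d}(\gamma)$ lies in $\overline{\widehat{\mu}(\widehat{G})}$, so there is a net (or sequence, when $\widehat{G}$ is metrizable) $\{\gamma_\alpha\} \subset \widehat{G}$ with $\widehat{\mu}(\gamma_\alpha) \to \widehat{\mu_d}(\gamma)$. Each term satisfies $|\widehat{\mu}(\gamma_\alpha)| \geq \delta$ by the definition of the infimum, and passing to the limit under the continuous function $|\cdot|$ yields $|\widehat{\mu_d}(\gamma)| \geq \delta$. Since $\gamma$ was arbitrary, the infimum over $\widehat{G}$ of $|\widehat{\mu_d}|$ is bounded below by $\delta$, which is exactly the desired inequality.

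There is no real obstacle here: the entire content is in Theorem \ref{dysk}, and the corollary just records the immediate quantitative consequence of that inclusion together with the elementary fact that $\inf$ is preserved under closure in the target. The only stylistic choice is whether to phrase the approximation via nets or sequences; since the Glicksberg--Wik statement is about topological closure in $\mathbb{C}$, working with nets is cleanest and avoids any metrizability assumption on $\widehat{G}$.
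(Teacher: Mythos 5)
Your proof is correct and is exactly the argument the paper has in mind (the paper omits it, calling the corollary ``straightforward''): Theorem \ref{dysk} places each value of $\widehat{\mu_d}$ in the closure of the range of $\widehat{\mu}$, and the lower bound $\delta$ on moduli passes to limits. The only quibble is that since the closure is taken in $\mathbb{C}$, which is metrizable, sequences always suffice regardless of any properties of $\widehat{G}$ -- but this does not affect correctness.
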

We are prepared now to ultimately solve the inversion problem.
\begin{thm}\label{glop}
Let $\mu\in M(G)$ satisfy $\|\mu\|\leq 1$ and $\inf_{\gamma\in\widehat{G}}|\widehat{\mu}(\gamma)|>\frac{1}{2}$. Then $\mu$ is invertible.
\end{thm}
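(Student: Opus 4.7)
The plan is essentially to combine the two preceding ingredients: Lemma \ref{latwy} gives invertibility once we control the discrete part alone, and Corollary \ref{dysdu} (a consequence of the Glicksberg--Wik theorem) tells us that the discrete part is never worse than the full measure when measured through the infimum of the Fourier--Stieltjes transform. So the strategy is just to read off the hypothesis of Lemma \ref{latwy} from the hypothesis we are given.

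Concretely, I would start from the assumption $\inf_{\gamma \in \widehat{G}} |\widehat{\mu}(\gamma)| > \tfrac{1}{2}$, then invoke Corollary \ref{dysdu} to conclude
\[
\inf_{\gamma \in \widehat{G}} |\widehat{\mu_{d}}(\gamma)| \;\geq\; \inf_{\gamma \in \widehat{G}} |\widehat{\mu}(\gamma)| \;>\; \tfrac{1}{2}.
\]
Since we still have $\|\mu\| \leq 1$, the hypotheses of Lemma \ref{latwy} are satisfied, and it returns invertibility of $\mu$ immediately.

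There is no real obstacle left at this stage, because the substantive work has been carried out beforehand: the reduction of the invertibility question to the discrete part is Lemma \ref{latwy}, whose proof uses that $\triangle(M_d(G)) = b\widehat{G}$ plus a simple contradiction argument splitting $\varphi_0(\mu) = 0$ into discrete and continuous contributions; and the transfer of spectral information from $\widehat{\mu}$ to $\widehat{\mu_d}$ is the content of the Glicksberg--Wik theorem. The only thing worth flagging is that the strict inequality passes through Corollary \ref{dysdu} unchanged, which is why the sharp threshold $\tfrac{1}{2}$ announced in the introduction is indeed attained by this argument.
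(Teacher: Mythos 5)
Your proposal is correct and coincides with the paper's own proof: both apply Corollary \ref{dysdu} to transfer the bound $\inf_{\gamma\in\widehat{G}}|\widehat{\mu}(\gamma)|>\frac{1}{2}$ to the discrete part and then conclude by Lemma \ref{latwy}. Nothing further is needed.
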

\begin{proof}
By Corollary \ref{dysdu} we have $\inf_{\gamma\in\widehat{G}}|\widehat{\mu_{d}}(\gamma)|\geq\inf_{\gamma\in\widehat{G}}|\widehat{\mu}(\gamma)|>\frac{1}{2}$. To finish the proof we just need to apply Lemma \ref{latwy}.
\end{proof}
\subsection{Quantitative results}
\subsubsection{The general case}
In this section we will attack the problem of improving the constant $\delta_{0}$ giving the following assertion: if $\mu\in M(G)$ satisfy $\|\mu\|\leq 1$ and $\inf_{\gamma\in\widehat{G}}|\widehat{\mu}(\gamma)|\geq\delta>\delta_{0}$ then $\|\mu^{-1}\|$ can be bounded as a function of $\delta$ (norm-controlled inversion). As it was explained in the introduction, if $G$ is non-discrete, then $\delta_{0}\geq\frac{1}{2}$. In addition, a non-trivial argument from \cite{nik} shows that the same statement holds true for arbitrary infinite discrete groups. Let us also recall that $\delta_{0}\leq\frac{1}{\sqrt{2}}$.
\\
We start with an elementary lemma.
\begin{lem}\label{sumki}
Let $(x_{n})_{n=1}^{\infty}$ be a non-increasing sequence of non-negative numbers satisfying
\begin{equation}\label{sum}
\sum_{n=1}^{\infty}x_{n}\leq 1,
\end{equation}
\begin{equation}\label{kwad}
\sum_{n=1}^{\infty}x_{n}^{2}\geq\delta^{2},\text{ where }1\geq\delta>\frac{1}{2}.
\end{equation}
Then the following inequalities hold true
\begin{equation}\label{pierw}
x_{1}\geq\delta^{2},
\end{equation}
\begin{equation}\label{sd}
x_{1}+x_{2}\geq\delta.
\end{equation}
\end{lem}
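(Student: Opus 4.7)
The first inequality is essentially immediate from the monotonicity. Since $(x_n)$ is non-increasing and non-negative, $x_n \leq x_1$ for every $n$, so
\begin{equation*}
\delta^2 \leq \sum_{n=1}^{\infty} x_n^2 \leq x_1 \sum_{n=1}^{\infty} x_n \leq x_1.
\end{equation*}

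For the second inequality I would argue by contradiction. If $x_1 \geq \delta$ there is nothing to show, so assume $x_1 < \delta$ and also $x_1 + x_2 < \delta$, i.e.\ $x_2 < \delta - x_1$. Using the same majorization trick on the tail, $x_n \leq x_2$ for $n \geq 2$ together with (\ref{sum}) gives
\begin{equation*}
\sum_{n=2}^{\infty} x_n^2 \leq x_2 \sum_{n=2}^{\infty} x_n \leq x_2(1 - x_1) < (\delta - x_1)(1 - x_1),
\end{equation*}
whence by (\ref{kwad}),
\begin{equation*}
\delta^2 < x_1^2 + (\delta - x_1)(1 - x_1).
\end{equation*}
Rearranging yields the quadratic inequality $q(x_1) := 2x_1^2 - (\delta+1)x_1 + \delta(1-\delta) > 0$.

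The key observation is that the discriminant of $q$ is $(\delta+1)^2 - 8\delta(1-\delta) = (3\delta - 1)^2$, so $q$ factors with roots $x = \delta$ and $x = \frac{1-\delta}{2}$, and $q(x_1) > 0$ forces $x_1 \notin \bigl[\tfrac{1-\delta}{2},\delta\bigr]$. But I have already shown $x_1 \geq \delta^2$, and the assumption $\delta > \tfrac12$ is equivalent to $\delta^2 > \tfrac{1-\delta}{2}$ (since $2\delta^2 + \delta - 1 = (2\delta - 1)(\delta + 1)$). Combined with $x_1 < \delta$, this places $x_1$ inside the forbidden interval, a contradiction. The main (and only) obstacle is this quadratic bookkeeping, together with checking that the hypothesis $\delta > \tfrac12$ is exactly what is needed to locate $\delta^2$ above the smaller root $\tfrac{1-\delta}{2}$ — which is reassuring, because $\delta = \tfrac12$ must be the critical threshold in view of the qualitative theorem that this lemma is being designed to quantify.
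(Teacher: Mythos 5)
Your proof is correct and takes essentially the same route as the paper's: the same majorization $\sum_{n\geq 2}x_{n}^{2}\leq x_{2}(1-x_{1})$ leads to the same quadratic $2x_{1}^{2}-(1+\delta)x_{1}+\delta-\delta^{2}$ with discriminant $(3\delta-1)^{2}$ and roots $\frac{1-\delta}{2}$ and $\delta$, and the hypothesis $\delta>\frac{1}{2}$ enters in exactly the same way, via $\delta^{2}\geq\frac{1-\delta}{2}$. The only cosmetic difference is that you phrase the endgame as a contradiction while the paper argues directly that $x_{1}$ lies in $[\frac{1-\delta}{2},\delta]$.
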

\begin{proof}
In order to prove (\ref{pierw}) we use consecutively (\ref{kwad}), monotonicity of the sequence $(x_{n})_{n=1}^{\infty}$ and then (\ref{sum})
\begin{equation*}
\delta^{2}\leq \sum_{n=1}^{\infty}x_{n}^{2}\leq x_{1}\sum_{n=1}^{\infty}x_{n}\leq x_{1}.
\end{equation*}
For the second inequality we first observe that by (\ref{kwad}) and the monotonicity:
\begin{equation*}
\delta^{2}\leq \sum_{n=1}^{\infty}x_{n}^{2}=x_{1}^{2}+\sum_{n=2}^{\infty}x_{n}^{2}\leq x_{1}^{2}+x_{2}\sum_{n=2}^{\infty}x_{n}.
\end{equation*}
Now, we apply (\ref{sum}) which gives
\begin{equation}\label{glk}
\delta^{2}\leq x_{1}^{2}+x_{2}(1-x_{1}).
\end{equation}
If $x_{1}\geq\delta$ then inequality (\ref{sd}) is obvious, so we are allowed to assume that $x_{1}< \delta$ and then (\ref{glk}) can be rewritten as
\begin{equation}\label{ddoln}
x_{2}\geq\frac{\delta^{2}-x_{1}^{2}}{1-x_{1}}.
\end{equation}
Thus, it is enough to justify
\begin{equation}\label{nie}
x_{1}+\frac{\delta^{2}-x_{1}^{2}}{1-x_{1}}\geq\delta\text{ for }x_{1}\in [\delta^{2},\delta].
\end{equation}
This is equivalent to
\begin{equation*}
2x_{1}^{2}-(1+\delta)x_{1}+\delta-\delta^{2}\leq 0\text{ for }x_{1}\in [\delta^{2},\delta].
\end{equation*}
This is a quadratic inequality with the discriminant $(3\delta-1)^{2}>\frac{1}{4}$. Elementary calculations lead to $x_{1}\in [\frac{1-\delta}{2},\delta]$ and therefore the assertion will be proved as long as $\frac{1-\delta}{2}\leq\delta^{2}$. However, the last inequality simply follows from the assumption $\delta>\frac{1}{2}$ establishing (\ref{sd}) and finishing the proof of the lemma.
\end{proof}
This lemma can be applied to obtain the following proposition.
\begin{prop}\label{sumadw}
Let $G$ be a locally compact Abelian group and let $\mu\in M(G)$ satisfy $\|\mu\|\leq 1$ and $|\widehat{\mu}(\gamma)|\geq\delta>\frac{1}{2}$ for every $\gamma\in\widehat{G}$. If
\begin{equation*}
\mu_{d}=\sum_{n=1}^{\infty}a_{n}\delta_{\tau_{n}}\text{ with }|a_{1}|\geq |a_{2}|\geq\ldots\text{ and for some }\tau_{n}\in G,
\end{equation*}
then $|a_{1}|\geq\delta^{2}$ and $|a_{1}|+|a_{2}|\geq\delta$.
\end{prop}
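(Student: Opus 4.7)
The plan is to reduce the proposition to Lemma \ref{sumki} applied to the sequence $x_n := |a_n|$. This sequence is non-increasing and non-negative by hypothesis, so it remains only to verify the two numerical assumptions (\ref{sum}) and (\ref{kwad}) of that lemma.

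For the first assumption, I would observe that
\begin{equation*}
\sum_{n=1}^{\infty}|a_{n}|=\|\mu_{d}\|\leq\|\mu_{c}\|+\|\mu_{d}\|=\|\mu\|\leq 1,
\end{equation*}
so (\ref{sum}) holds. For the second, I would invoke the generalization of the Wiener lemma already recalled in the introduction: if $\{f_{\alpha}\}$ is a system of continuous positive-definite functions with compact supports subordinated to a neighborhood base of $0$, normalized by $f_{\alpha}(0)=1$, then
\begin{equation*}
\sum_{n=1}^{\infty}|a_{n}|^{2}=\sum_{x\in G}|\mu(\{x\})|^{2}=(\mu\ast\widetilde{\mu})(\{0\})=\lim_{\alpha}\int_{\widehat{G}}\widehat{f_{\alpha}}(\gamma)|\widehat{\mu}(\gamma)|^{2}\,d\gamma.
\end{equation*}
Since $|\widehat{\mu}(\gamma)|^{2}\geq\delta^{2}$ pointwise and each $\widehat{f_{\alpha}}$ is a non-negative function with total integral equal to $f_{\alpha}(0)=1$, each integral in the limit is bounded below by $\delta^{2}$. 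This yields $\sum_{n=1}^{\infty}|a_{n}|^{2}\geq\delta^{2}$, which is exactly (\ref{kwad}).

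With both hypotheses of Lemma \ref{sumki} verified, the conclusions $x_{1}\geq\delta^{2}$ and $x_{1}+x_{2}\geq\delta$ translate directly into $|a_{1}|\geq\delta^{2}$ and $|a_{1}|+|a_{2}|\geq\delta$, which is the statement of the proposition. The only point requiring care is the identification of the atomic part of $\mu\ast\widetilde{\mu}$ at $0$ with $\sum_{n}|a_{n}|^{2}$ and the lower estimate for the limit, both of which follow directly from the material recalled in the introduction; the rest is a routine application of the preceding lemma.
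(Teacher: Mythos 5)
Your proof is correct, but it verifies hypothesis (\ref{kwad}) by a genuinely different route than the paper. The paper does not go through the generalized Wiener lemma at this point: it first invokes the Glicksberg--Wik theorem (Theorem \ref{dysk}) to transfer the pointwise bound $|\widehat{\mu}|\geq\delta$ on $\widehat{G}$ to the bound $|\widehat{\mu_{d}}|\geq\delta$ on $\widehat{G}$, then views $\mu_{d}$ as an element of $l^{2}(G_{d})$, extends the bound by density and continuity to the Bohr compactification $b\widehat{G}\cong(G_{d})\sphat\,$, and applies Parseval's identity there to get $\sum_{n}|a_{n}|^{2}=\int_{b\widehat{G}}|\widehat{\mu_{d}}|^{2}\geq\delta^{2}$. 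You instead apply Rudin's 5.6.9 directly to $\mu$, identifying $\sum_{n}|a_{n}|^{2}$ with $(\mu\ast\widetilde{\mu})(\{0\})$ and bounding each approximating integral below by $\delta^{2}$ using $\widehat{f_{\alpha}}\geq 0$ and $\int_{\widehat{G}}\widehat{f_{\alpha}}=f_{\alpha}(0)=1$; this is exactly the computation the paper already carried out in the introduction when reproving Nikolski's $\frac{1}{\sqrt{2}}$ bound, so it is fully justified in context. Your route avoids the Bohr compactification and Theorem \ref{dysk} entirely (at the cost of relying on the deeper Wiener-type lemma), while the paper's route reuses Glicksberg--Wik, which it needs elsewhere anyway. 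Both arguments are sound, and the reduction to Lemma \ref{sumki} and the treatment of (\ref{sum}) via $\|\mu\|=\|\mu_{c}\|+\|\mu_{d}\|$ match the paper.
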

\begin{proof}
Clearly, $\|\mu_{d}\|\leq 1$ which is equivalent to the assumption (\ref{sum}) from Lemma \ref{sumki}.
\\
We can treat $\mu_{d}$ as an element of $l^{2}(G_{d})$ ($G_{d}$ is the original group $G$ equipped with the discrete topology). Then $\widehat{\mu_{d}}\in L^{2}\left(\left(G_{d}\right)\dziub\right)$. It is well-known that $\left(G_{d}\right)\dziub$ is canonically isomorphic to $b\widehat{G}$ -- the Bohr compactification of $\widehat{G}$. By Theorem \ref{dysk} we have $|\widehat{\mu_{d}}(\gamma)|\geq\delta$ for every $\gamma\in\widehat{G}$ and since $\widehat{G}$ is dense in $b\widehat{G}$ we obtain by Parseval's identity:
\begin{equation*}
\sum_{n=1}^{\infty}|a_{n}|^{2}=\int_{b\widehat{G}}|\widehat{\mu_{d}}|^{2}(x)dx\geq\delta^{2} \text{ (here $dx$ is the normalized Haar measure on $b\widehat{G}$)},
\end{equation*}
which proves \eqref{kwad} from Lemma \ref{sumki} and enables us to use the assertion of the lemma to finish the argument.
\end{proof}
In order to proceed further, we need yet another technical lemma.
\begin{lem}\label{rzedy}
Let $G$ be a locally compact Abelian group and for $x\in G$ define
\begin{equation*}
S_{x}=\{\gamma(x):\gamma\in\widehat{G}\}\subset\{z\in\mathbb{C}:|z|=1\}.
\end{equation*}
If $x$ is of order $n<\infty$ then $S_{x}=\mathbb{Z}_{n}$ and if $x$ is of infinite order then $S_{x}$ is a dense subgroup of $\{z\in\mathbb{C}:|z|=1\}$.
\end{lem}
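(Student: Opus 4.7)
The plan is to lean entirely on Pontryagin duality, and in particular on the point-separation property of the dual group: for any nonzero $y\in G$ there exists $\gamma\in\widehat{G}$ with $\gamma(y)\neq 1$. The other standing input will be the well-known classification of the closed subgroups of the unit circle, namely that they are either the full circle or one of the finite groups of $n$-th roots of unity. Together these two facts essentially force the conclusion.

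First I would record the structural fact that, for each fixed $x\in G$, the evaluation map $\gamma\mapsto\gamma(x)$ is a continuous group homomorphism $\widehat{G}\to\mathbb{T}$, so its image $S_x$ is automatically a subgroup of $\mathbb{T}$. This reduces the lemma to identifying which subgroup it is.

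For the finite-order case I would start with the routine inclusion $S_x\subseteq\mathbb{Z}_n$: for every $\gamma\in\widehat{G}$ one has $\gamma(x)^n=\gamma(nx)=\gamma(0)=1$. To upgrade this to equality I would use that every subgroup of the cyclic group $\mathbb{Z}_n$ has the form $\mathbb{Z}_d$ for some divisor $d$ of $n$. If $d<n$ then $\gamma(x)^d=\gamma(dx)=1$ for every $\gamma\in\widehat{G}$, and by the point-separation property this forces $dx=0$, contradicting the fact that $x$ has exact order $n$. Hence $d=n$ and $S_x=\mathbb{Z}_n$.

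For the infinite-order case the target is only density, so I would pass to the closure $\overline{S_x}$, which is a closed subgroup of $\mathbb{T}$, and apply the classification: either $\overline{S_x}=\mathbb{T}$, which is what we want, or $\overline{S_x}=\mathbb{Z}_m$ for some finite $m$. In the latter scenario the same trick as before applies, namely $\gamma(x)^m=\gamma(mx)=1$ for all $\gamma\in\widehat{G}$, whence $mx=0$ by point separation, contradicting the assumption that $x$ has infinite order. So only the alternative $\overline{S_x}=\mathbb{T}$ survives. I do not anticipate any substantive obstacle; the only mildly delicate point is that ruling out a proper subgroup is handled uniformly in both cases by the same separating-characters argument, applied to $dx$ and $mx$ respectively.
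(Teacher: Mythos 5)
Your proof is correct, but it runs along a genuinely different track from the one in the paper. You work entirely ``downstairs'' in the circle: you note that $S_x$ is the image of the homomorphism $\gamma\mapsto\gamma(x)$, hence a subgroup of $\mathbb{T}$, invoke the classification of (closures of) subgroups of $\mathbb{T}$ as either all of $\mathbb{T}$ or a finite cyclic group $\mathbb{Z}_d$, and then kill the proper-subgroup alternative in both cases by one and the same observation: $\gamma(dx)=1$ for all $\gamma\in\widehat{G}$ forces $dx=0$ because characters of a locally compact Abelian group separate points. The paper instead works ``upstairs'': it passes to the closed subgroup $H$ generated by $x$, uses extension of characters from $H$ to $G$, identifies $\widehat{\mathbb{Z}_n}=\mathbb{Z}_n$ in the finite case, and in the infinite case runs a structural case analysis on the monothetic group $H$ (either $H=\mathbb{Z}$ or $H$ is compact with $\widehat{H}$ a subgroup of $\mathbb{T}_d$, subdivided further according to whether $\widehat{H}$ has an element of infinite order). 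The two approaches lean on cognate consequences of duality theory --- point separation in your case, character extension in the paper's --- so neither is foundationally cheaper; but your argument is noticeably more uniform, replacing the paper's several-case discussion of monothetic groups with a single two-line contradiction, while the paper's route has the side benefit of exhibiting $S_x$ explicitly as the character group of $\overline{\langle x\rangle}$. One small point worth making explicit when you write this up: in the infinite-order case, if $\overline{S_x}=\mathbb{Z}_m$ then $S_x$ itself is already this finite group, so the relation $\gamma(x)^m=1$ holds for every $\gamma$ exactly as you claim; and in the finite-order case you should note that $d<n$ with $d\mid n$ indeed gives $dx\neq 0$ because $n$ is the exact order of $x$. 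With those remarks in place the proof is complete.
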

\begin{proof}
In case of element $x\in G$ of finite order $n$ we simply observe that the closed subgroup generated by $x$ is isomorphic to $\mathbb{Z}_{n}$ and as $\mathbb{Z}_{n}\hspace{-3 pt}\widehat{\phantom{i}}=\mathbb{Z}_{n}$ the result follows by the well-known fact that characters on the closed subgroup can be extended to the whole group.
\\
If $x$ is of infinite order we formally verify first that $S_{x}$ is a subgroup of the circle group and since every infinite subgroup of the circle group is dense it is enough to show $\# S_{x}=\infty$. Let $H$ be a closed subgroup of $G$ generated by $x$. By definition, $H$ is a \textit{monothetic group}\footnote{A locally compact Abelian group is called monothetic if it contains a dense homomorphic image of $\mathbb{Z}$.} and by the results from Section 2.3 in \cite{r} we obtain that either $H=\mathbb{Z}$ or $H$ is a compact group whose dual is a subgroup of $\mathbb{T}_{d}$ (the circle group with the discrete topology). The first case is elementary and for the second one we continue as follows: since $H$ is infinite, $\widehat{H}$ is also infinite. Now, if $\widehat{H}$ contains an element of infinite order then we are done, as every character in $\widehat{H}$ is uniquely defined by its action on $x$. Otherwise, $\widehat{H}$ contains elements of arbitrarily high order and again since $x$ topologically generates $H$ we obtain that for a fixed $\gamma_{0}\in\widehat{H}$ the set $\{\gamma_{0}^{k}(x):k\in\mathbb{N}\}\subset S_{x}$ has the same number of elements as the order of $\gamma_{0}$, which finishes the proof.
\end{proof}
We are ready now to prove one of the two main theorems of this section. Note that the group-theoretic assumption from the next theorem is automatically satisfied for some classical groups such as $\mathbb{R}$ and $\mathbb{Z}$.
\begin{thm}\label{nies}
Let $\mu\in M(G)$ satisfy $\|\mu\|\leq 1$ and $|\widehat{\mu}(\gamma)|>\delta>\frac{1}{2}$ for every $\gamma\in\widehat{G}$. Let
\begin{equation*}
\mu_{d}=\sum_{n=1}^{\infty}a_{n}\delta_{\tau_{n}},\text{ }|a_{1}|\geq |a_{2}|\geq\ldots.
\end{equation*}
If the order of element $\tau_{2}-\tau_{1}$ is infinite then
\begin{equation*}
|a_{1}|\geq \frac{1-\delta+\sqrt{17\delta^{2}+6\delta-7}}{4}\geq\frac{3}{2}\delta-\frac{1}{2}\text{ and }
\end{equation*}
\begin{gather*}
\|\mu^{-1}\|\leq\frac{1}{3\delta-2}\text{ for }\delta>\frac{2}{3},\\
\|\mu^{-1}\|\leq\frac{2}{-(1+\delta)+\sqrt{17\delta^{2}+6\delta-7}}\text{ for }\delta>\frac{-1+\sqrt{33}}{8}\simeq 0,593.
\end{gather*}
\end{thm}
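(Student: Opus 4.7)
The plan is to boost the lower bound on $|a_1|$ furnished by Proposition~\ref{sumadw} using an upper bound on $|a_2|$ extracted from the infinite-order hypothesis on $\tau_2-\tau_1$, and then to convert this improved atomic dominance into a bound on $\|\mu^{-1}\|$ via Lemma~\ref{latw}.

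For the upper bound on $|a_2|$, I would exploit the factorization $a_1\gamma(\tau_1)+a_2\gamma(\tau_2)=\gamma(\tau_1)\bigl(a_1+a_2\gamma(\tau_2-\tau_1)\bigr)$. If the phase $\gamma(\tau_2-\tau_1)$ were exactly equal to $-a_1\overline{a_2}/(|a_1||a_2|)$, the bracketed term would have modulus $|a_1|-|a_2|$. Since $\tau_2-\tau_1$ has infinite order, Lemma~\ref{rzedy} allows us to approximate this prescribed unit complex number by $\gamma(\tau_2-\tau_1)$ for some $\gamma\in\widehat{G}$ to within any $\varepsilon>0$. The remaining atoms and the continuous part contribute at most $\|\mu\|-|a_1|-|a_2|\leq 1-|a_1|-|a_2|$ to $|\widehat{\mu}(\gamma)|$, so
$$
\delta<|\widehat{\mu}(\gamma)|\leq(|a_1|-|a_2|)+(1-|a_1|-|a_2|)+\varepsilon=1-2|a_2|+\varepsilon,
$$
and letting $\varepsilon\to 0$ gives $|a_2|\leq(1-\delta)/2$.

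Proposition~\ref{sumadw} verifies the hypotheses of Lemma~\ref{sumki} for $x_n=|a_n|$, so the intermediate inequality \eqref{glk} from its proof reads $\delta^2\leq|a_1|^2+|a_2|(1-|a_1|)$. Inserting $|a_2|\leq(1-\delta)/2$ and rearranging yields the quadratic inequality $2|a_1|^2-(1-\delta)|a_1|+(1-\delta)-2\delta^2\geq 0$ with discriminant exactly $17\delta^2+6\delta-7$; since its constant term is negative for $\delta>1/2$, $|a_1|$ must exceed the larger root, which is the first claimed bound. A short computation shows that $\tfrac14(1-\delta+\sqrt{17\delta^2+6\delta-7})\geq(3\delta-1)/2$ is equivalent to $(2\delta-1)(\delta-1)\leq 0$, and is therefore valid on $[1/2,1]$.

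Finally, translating $\mu$ by $\delta_{-\tau_1}$ (which preserves $\|\mu\|$, $|\widehat{\mu}|$, and $\|\mu^{-1}\|$) brings it into the form $a_1\delta_0+\nu$ with $\nu(\{0\})=0$, so Lemma~\ref{latw} applies with $\lambda=a_1$ and gives $\|\mu^{-1}\|\leq(2|a_1|-1)^{-1}$ provided $|a_1|>1/2$. Plugging in the two lower bounds on $|a_1|$ produces the two displayed estimates, and the thresholds $\delta>2/3$ and $\delta>(-1+\sqrt{33})/8$ are precisely those making the corresponding lower bounds on $|a_1|$ strictly exceed $1/2$. The main subtlety of the argument is the character-selection step: one has to ensure that the density in the circle supplied by Lemma~\ref{rzedy} yields $\gamma\in\widehat{G}$ itself (rather than merely in the Bohr compactification), so that the strict inequality $|\widehat{\mu}(\gamma)|>\delta$ can be invoked.
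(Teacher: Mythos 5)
Your argument is correct and follows essentially the same route as the paper: the same use of Lemma \ref{rzedy} to align the phase of the second atom against the first and extract $|a_2|\leq\frac{1-\delta}{2}$, the same quadratic inequality drawn from the proof of Lemma \ref{sumki} via Proposition \ref{sumadw}, and the same final appeal to Lemma \ref{latw}. The only (harmless) deviations are cosmetic: you bound the continuous part by its total variation norm instead of first passing to $\mu_d$ via Theorem \ref{dysk}, and you substitute into \eqref{glk} rather than its rearranged form \eqref{ddoln}.
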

\begin{proof}
Since both multiplication by a constant of modulus one and convolution with a Dirac delta do not violate the assumptions of the theorem and do not change the norm of the inverse, we are allowed to work with a measure $\nu:=c\mu\ast\delta_{-\tau_{1}}$ where $ca_{1}=|a_{1}|$ instead of the original measure $\mu$. Then
\begin{equation*}
\nu_{d}=|a_{1}|\delta_{0}+ca_{2}\delta_{\tau_{2}-\tau_{1}}+\rho,\text{ where }\rho:=c\sum_{n=3}^{\infty}a_{n}\delta_{\tau_{n}-\tau_{1}}.
\end{equation*}
As the order of $\tau_{2}-\tau_{1}$ is infinite, basing on Lemma \ref{rzedy}, we are allowed to pick a sequence $(\gamma_{n})_{n=1}^{\infty}\subset\widehat{G}$ such
that
\begin{equation}\label{min}
\widehat{\delta}_{\tau_{2}-\tau_{1}}(\gamma_{n})\cdot ca_{2}=\gamma_{n}(\tau_{2}-\tau_{1})\cdot ca_{2}\xrightarrow[n\rightarrow\infty]{}-|a_{2}|.
\end{equation}
By our assumptions and Theorem \ref{dysk} we have
\begin{equation}\label{fnie}
||a_{1}|+ca_{2}\widehat{\delta}_{\tau_{2}-\tau_{1}}(\gamma_{n})+\widehat{\rho}(\gamma_{n})|\geq\delta.
\end{equation}
Using elementary inequalities, $\|\rho\|=\|\mu_{d}\|-|a_{1}|-|a_{2}|$ and passing with the parameter $n$ to infinity we obtain, by (\ref{min}),
\begin{equation}\label{drugmal}
\|\mu_{d}\|-2|a_{2}|\geq\delta\text{, which is equivalent to }|a_{2}|\leq\frac{\|\mu_{d}\|-\delta}{2}\leq\frac{1-\delta}{2}.
\end{equation}
By Proposition \ref{sumadw} we get $|a_{1}|+|a_{2}|\geq\delta$. Combining this with (\ref{drugmal}) we obtain $|a_{1}|\geq\frac{3}{2}\delta-\frac{1}{2}$ and by Lemma \ref{latw} we get $\|\mu^{-1}\|\leq\frac{1}{3\delta-2}$ for $\delta>\frac{2}{3}$.
\\
In order to get a more refined bound we recall that by (\ref{ddoln}) (from the proof of Lemma \ref{sumki}) and (\ref{drugmal}):
\begin{equation}\label{kwadra}
\frac{1-\delta}{2}\geq\frac{\delta^{2}-|a_{1}|^{2}}{1-|a_{1}|}\text{ or equivalently }|a_{1}|^{2}-\frac{1-\delta}{2}|a_{1}|+\frac{1-\delta}{2}-\delta^{2}\geq 0.
\end{equation}
It is a quadratic inequality with discriminant $D:=\frac{17}{4}\delta^{2}+\frac{3}{2}\delta-\frac{7}{4}$, which is easily shown to be positive as $\delta>\frac{1}{2}$. With the aid of Vi\`{e}te formulas (note that $\frac{1-\delta}{2}-\delta^{2}<0$), we finally get
\begin{equation*}
|a_{1}|\geq\frac{\frac{1-\delta}{2}+\sqrt{D}}{2}.
\end{equation*}
The elementary verification of the inequality $\frac{\frac{1-\delta}{2}+\sqrt{D}}{2}\geq\frac{3}{2}\delta-\frac{1}{2}$ and the condition $\frac{\frac{1-\delta}{2}+\sqrt{D}}{2}>\frac{1}{2}$ iff $\delta>\frac{-1+\sqrt{33}}{8}$ finishes the argument by Lemma \ref{latw}.
\end{proof}
It is worth to state the full formulation for the most classical case of $G=\mathbb{Z}$.
\begin{thm}
Let $f\in A(\mathbb{T})$ satisfy $\|f\|\leq 1$ and $|f(t)|\geq\delta>\frac{1}{2}$ for every $t\in\mathbb{T}$. Then
\begin{gather*}
\left\|\frac{1}{f}\right\|\leq\frac{1}{3\delta-2}\text{ for }\delta>\frac{2}{3},\\
\left\|\frac{1}{f}\right\|\leq\frac{2}{-(1+\delta)+\sqrt{17\delta^{2}+6\delta-7}}\text{ for }\delta>\frac{-1+\sqrt{33}}{8}\simeq 0,593.
\end{gather*}
\end{thm}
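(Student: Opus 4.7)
The plan is to recognize this as a direct specialization of Theorem \ref{nies} to $G=\mathbb{Z}$, using the standard identification of the Wiener algebra $A(\mathbb{T})$ with $\ell^{1}(\mathbb{Z})$. First I would recall that the Fourier coefficient map $f\mapsto (\widehat{f}(n))_{n\in\mathbb{Z}}$ is an isometric algebra isomorphism from $A(\mathbb{T})$ onto $\ell^{1}(\mathbb{Z})$, and that $\ell^{1}(\mathbb{Z})=M(\mathbb{Z})$ since $\mathbb{Z}$ is discrete. So the function $f$ corresponds to the measure $\mu:=\sum_{n\in\mathbb{Z}}\widehat{f}(n)\delta_{n}\in M(\mathbb{Z})$, and $\|\mu\|_{M(\mathbb{Z})}=\|f\|_{A(\mathbb{T})}\leq 1$. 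Identifying $\widehat{\mathbb{Z}}$ with $\mathbb{T}$, the Fourier--Stieltjes transform $\widehat{\mu}$ agrees with $f$, so the hypothesis $|f(t)|\geq\delta>\tfrac{1}{2}$ on $\mathbb{T}$ translates into $|\widehat{\mu}(\gamma)|\geq\delta>\tfrac{1}{2}$ on $\widehat{\mathbb{Z}}$.

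Next I would verify the group-theoretic hypothesis of Theorem \ref{nies}. Since $\mathbb{Z}$ is discrete, $\mu_{c}=0$ and $\mu_{d}=\mu$; write $\mu_{d}=\sum_{n=1}^{\infty}a_{n}\delta_{\tau_{n}}$ with $|a_{1}|\geq |a_{2}|\geq\cdots$ and $\tau_{n}\in\mathbb{Z}$ distinct. Every non-zero element of $\mathbb{Z}$ is of infinite order, and $\tau_{2}\neq\tau_{1}$, so $\tau_{2}-\tau_{1}$ is automatically of infinite order (provided $\mu$ is genuinely supported on at least two points; the degenerate case $\mu=a_{1}\delta_{\tau_{1}}$ is trivial since then $\mu^{-1}=a_{1}^{-1}\delta_{-\tau_{1}}$ and $\|\mu^{-1}\|\leq 1/\delta$, which is stronger than either bound claimed).

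The conclusion then follows by quoting Theorem \ref{nies} verbatim: for $\delta>\tfrac{2}{3}$ one gets $\|\mu^{-1}\|\leq (3\delta-2)^{-1}$, and for $\delta>\tfrac{-1+\sqrt{33}}{8}$ one gets the sharper bound $\|\mu^{-1}\|\leq 2/(-(1+\delta)+\sqrt{17\delta^{2}+6\delta-7})$. Transporting back through the isomorphism $M(\mathbb{Z})\cong A(\mathbb{T})$, these are exactly the asserted bounds on $\|1/f\|_{A(\mathbb{T})}$.

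There is essentially no obstacle here; the whole content is the transfer of Theorem \ref{nies} to the Wiener algebra through the standard duality $\mathbb{Z}\leftrightarrow\mathbb{T}$. The only small point worth writing out explicitly is the check that the hypothesis on the order of $\tau_{2}-\tau_{1}$ is automatic in $\mathbb{Z}$, which is what makes the classical case of trigonometric series especially clean.
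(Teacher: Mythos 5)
Your proposal is correct and matches the paper's intent exactly: the paper states this result immediately after Theorem \ref{nies} as its specialization to $G=\mathbb{Z}$ via the identification $A(\mathbb{T})\cong \ell^{1}(\mathbb{Z})=M(\mathbb{Z})$, and offers no further argument. Your explicit checks --- that every non-zero element of $\mathbb{Z}$ has infinite order, so the hypothesis on $\tau_{2}-\tau_{1}$ is automatic, and that the single-atom case is trivial --- are exactly the (unwritten) details needed.
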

At last, we deal with the case of elements of finite order.
\begin{thm}\label{skonczo}
Let $\mu\in M(G)$ satisfy $\|\mu\|\leq 1$ and $|\widehat{\mu}(\gamma)|>\delta$ for every $\gamma\in\widehat{G}$. Let
\begin{equation*}
\mu_{d}=\sum_{n=1}^{\infty}a_{n}\delta_{\tau_{n}},\text{ }|a_{1}|\geq |a_{2}|\geq\ldots.
\end{equation*}
If the order of element $\tau_{2}-\tau_{1}$ is equal to $n\in\mathbb{N}$ then
\begin{equation*}
|a_{1}|\geq \delta-\frac{1-\delta}{2(1-\sin\frac{\pi}{2n})}:=f(\delta)\text{ and }
\end{equation*}
\begin{equation*}
\|\mu^{-1}\|\leq\frac{1}{2f(\delta)-1}\text{ for }\delta>\frac{2-\sin\frac{\pi}{2n}}{3-\sin\frac{\pi}{2n}}.\\
\end{equation*}
\end{thm}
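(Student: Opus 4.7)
The plan is to mirror the proof of Theorem~\ref{nies}, replacing the limit argument (which exploited density of $S_{\tau_2-\tau_1}$ in $\mathbb{T}$ when the order is infinite) with a discrete geometric estimate that encodes the defect forced by the finite order of $\tau_2-\tau_1$.

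As before, I would first normalize by replacing $\mu$ with $\nu:=c\mu\ast\delta_{-\tau_1}$, where $ca_1=|a_1|$; this preserves $\|\mu\|$ and $\inf_\gamma|\widehat{\mu}(\gamma)|$, and $\|\mu^{-1}\|=\|\nu^{-1}\|$. Write
\[
\nu_d = |a_1|\delta_0 + ca_2\delta_{\tau_2-\tau_1}+\rho, \qquad \rho:=c\sum_{k\geq 3}a_k\delta_{\tau_k-\tau_1}.
\]
By Lemma~\ref{rzedy}, since $\tau_2-\tau_1$ has order $n$, we have $S_{\tau_2-\tau_1}=\mathbb{Z}_n$ exactly, so the orbit $\{ca_2\gamma(\tau_2-\tau_1):\gamma\in\widehat{G}\}$ is a rotated copy of $|a_2|\mathbb{Z}_n$, i.e.\ $n$ equally spaced points on the circle of radius $|a_2|$. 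Because consecutive gaps have angular size $2\pi/n$, every point on that circle lies within angular distance $\pi/n$ of some orbit element; using $|1-e^{i\beta}|=2\sin(\beta/2)$, I can therefore pick $\gamma_0\in\widehat{G}$ with
\[
ca_2\gamma_0(\tau_2-\tau_1) = -|a_2|+\xi, \qquad |\xi|\leq 2|a_2|\sin(\pi/2n).
\]

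Next, Theorem~\ref{dysk} applied to $\nu$ gives $|\widehat{\nu_d}(\gamma_0)|\geq\delta$, and since $|a_1|\geq|a_2|$ the triangle inequality produces
\[
\delta \leq \bigl|(|a_1|-|a_2|)+\xi+\widehat{\rho}(\gamma_0)\bigr| \leq (|a_1|-|a_2|)+2|a_2|\sin(\pi/2n)+\|\rho\|.
\]
Substituting $\|\rho\|\leq\|\nu_d\|-|a_1|-|a_2|\leq 1-|a_1|-|a_2|$ and rearranging yields $|a_2|\leq (1-\delta)/(2(1-\sin(\pi/2n)))$. Combined with Proposition~\ref{sumadw} (which furnishes $|a_1|+|a_2|\geq\delta$), this gives $|a_1|\geq f(\delta)$. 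In the stated range of $\delta$, which forces $f(\delta)>\frac{1}{2}$, Lemma~\ref{latw} applied to $\nu$ then delivers $\|\nu^{-1}\|\leq 1/(2f(\delta)-1)$, and $\|\mu^{-1}\|$ satisfies the same bound.

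The only new ingredient beyond the proof of Theorem~\ref{nies} is the geometric estimate controlling the worst-case alignment of a rotated copy of $\mathbb{Z}_n$ with the target direction $-1$; this is where the defect $2\sin(\pi/2n)$ enters, replacing the zero defect that was achieved through limits in the infinite-order case. Everything else is a routine adaptation.
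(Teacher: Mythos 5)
Your proposal is correct and follows essentially the same route as the paper's proof: the same normalization $\nu=c\mu\ast\delta_{-\tau_{1}}$, the same use of Lemma \ref{rzedy} to find $\gamma_{0}$ with $\left|ca_{2}\gamma_{0}(\tau_{2}-\tau_{1})+|a_{2}|\right|\leq 2|a_{2}|\sin\frac{\pi}{2n}$, the same triangle-inequality estimate yielding $|a_{2}|\leq\frac{1-\delta}{2(1-\sin\frac{\pi}{2n})}$, and the same appeal to Proposition \ref{sumadw} and Lemma \ref{latw}. One caveat you inherit from the statement itself: $f(\delta)>\frac{1}{2}$ is equivalent to $\delta>\frac{2-s}{3-2s}$ with $s=\sin\frac{\pi}{2n}$, which for $n\geq 2$ is strictly stronger than the displayed threshold $\delta>\frac{2-s}{3-s}$, so your remark that "the stated range forces $f(\delta)>\frac{1}{2}$" is not literally true as written, though the argument is otherwise sound.
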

\begin{proof}
We perform the same reductions as in the proof of Theorem \ref{nies} and arrive to the following point (check (\ref{nie})):
\begin{equation*}
\left||a_{1}|+ca_{2}\gamma(\tau_{2}-\tau_{1})+\widehat{\rho}(\gamma)\right|\geq\delta\text{ for every $\gamma\in\widehat{G}$}.
\end{equation*}
As the order of $\tau_{2}-\tau_{1}$ is equal to $n$ we know, by Lemma \ref{rzedy}, that $\{\gamma(\tau_{2}-\tau_{1}):\gamma\in\widehat{G}\}$ is the set of all vertices of a regular $n$-gon inscribed in the unit circle with one vertex at the point $1$. Using elementary geometry\footnote{We estimate the distance of any point of the unit circle from the nearest vertex of the regular $n$-gon inscribed in it by the distance of a midpoint of an arc joining two neighboring vertices.} we find $\gamma_{0}\in\widehat{G}$ such that
\begin{equation*}
|c\mathrm{sgn}(a_{2})\gamma_{0}(\tau_{2}-\tau_{1})+1|\leq 2\sin\frac{\pi}{2n}.
\end{equation*}
This gives
\begin{equation*}
\left|\left||a_{1}|+ca_{2}\gamma_{0}(\tau_{2}-\tau_{1})\right|-\left||a_{1}|-|a_{2}|\right|\right|\leq \left||a_{1}|+ca_{2}\gamma(\tau_{2}-\tau_{1})-\left(|a_{1}|-|a_{2}|\right)\right|\leq 2|a_{2}|\sin\frac{\pi}{2n}
\end{equation*}
Repeating the estimates from the proof of Theorem \ref{nies} we easily get $|a_{2}|\leq\frac{1-\delta}{2(1-\sin\frac{\pi}{2n})}$, $|a_{1}|\geq\delta-\frac{1-\delta}{2(1-\sin\frac{\pi}{2n})}$ and finish the argument using Lemma \ref{latw}.
\end{proof}
\begin{rem}
It is possible to obtain a more refined bound in Theorem \ref{skonczo} applying the same method as in the proof of Theorem \ref{nies}.
\end{rem}

\subsubsection{Independent case}
We pass now to the analysis of a restricted class of measures (in fact, the discrete part is the most important) in order to get the estimates for the inverse. The first (trivial) lemma will help us to deal with measures with the discrete part consisting of a finite sum of Dirac deltas at points constituting an independent set.
\begin{lem}\label{pocz}
Let $x_{1},x_{2},\ldots,x_{N}$, $N\in\mathbb{N}$ be a finite sequence of non-negative numbers satisfying $x_{1}\geq x_{2}\geq\ldots\geq x_{N}$ and $\sum_{n=1}^{N}x_{n}\leq 1$ and let $\delta>\frac{1}{2}$. Assume also that the following inequalities hold true:
\begin{equation*}
\left|\sum_{n=1}^{N}(-1)^{n-1}x_{n}\right|\geq\delta\text{ and }\left|x_{1}-\sum_{n=2}^{N}x_{n}\right|\geq\delta
\end{equation*}
Then
\begin{equation*}
x_{1}\geq \delta+\sum_{n=2}^{N}x_{n}.
\end{equation*}
\end{lem}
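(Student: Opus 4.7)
The plan is to split on the sign inside the second hypothesis and rule out the negative case by contradiction with the first hypothesis. Set $S := \sum_{n=2}^{N} x_n$ and $T := \sum_{n=1}^{N} (-1)^{n-1} x_n$. The condition $|x_1 - S| \geq \delta$ is either $x_1 - S \geq \delta$, in which case the conclusion $x_1 \geq \delta + S$ follows immediately, or $S - x_1 \geq \delta$, which is the case I need to exclude.

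In the remaining case, combining $S - x_1 \geq \delta$ with the normalization $x_1 + S \leq 1$ forces $x_1 \leq \tfrac{1-\delta}{2}$; since $\delta > \tfrac{1}{2}$ this gives in particular $x_1 < \tfrac{1}{4} < \delta$. It therefore suffices to show that $|T| \leq x_1$ — this will contradict $|T| \geq \delta$.

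The key observation, and the only genuinely content-bearing step, is the two-sided estimate $0 \leq T \leq x_1$, which I obtain from the monotonicity $x_1 \geq x_2 \geq \cdots \geq x_N \geq 0$ by regrouping the alternating sum in two different ways. Pairing from the left as
\begin{equation*}
T = (x_1 - x_2) + (x_3 - x_4) + \cdots
\end{equation*}
(with a dangling $+x_N$ when $N$ is odd) expresses $T$ as a sum of non-negative terms, giving $T \geq 0$. Pairing from the second term onward as
\begin{equation*}
T = x_1 - (x_2 - x_3) - (x_4 - x_5) - \cdots
\end{equation*}
(with a dangling $-x_N$ when $N$ is even) exhibits $x_1 - T$ as a sum of non-negative terms, giving $T \leq x_1$. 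Hence $|T| \leq x_1 < \delta$, contradicting the first hypothesis, and the second case is impossible.

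I do not expect a real obstacle here; the main thing to get right is the parity bookkeeping in the two pairings, which is the only place the argument is non-trivial. Everything else is a direct consequence of the hypotheses and the triangle inequality.
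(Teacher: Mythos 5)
Your proof is correct and rests on exactly the same key observation as the paper's, namely the two-sided bound $0\leq\sum_{n=1}^{N}(-1)^{n-1}x_{n}\leq x_{1}$ obtained from the two pairings of the alternating sum. The paper just runs it directly ($x_{1}\geq\delta$ forces $x_{1}-\sum_{n=2}^{N}x_{n}>0$ via the normalization, so the second hypothesis reads with the positive sign) rather than by contradiction, but this is only a difference of presentation.
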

\begin{proof}
As the sequence $(x_{n})_{n=1}^{N}$ is non-increasing we have $\sum_{n=1}^{N}(-1)^{n-1}x_{n}\geq 0$. Of course, $x_{1}\geq \sum_{n=1}^{N}(-1)^{n-1}x_{n}\geq\delta$. Since $\sum_{n=1}^{N}x_{N}\leq 1$ the second inequality proves the assertion.
\end{proof}
We also need a generalization of the classical Kronecker's approximation theorem (see 5.1.3 in \cite{r}). By definition, a subset $E\subset G$ is independent if for every choice of distinct points $x_{1},\ldots,x_{k}$ of $E$ and integers $n_{1},\ldots,n_{k}$ either
\begin{equation*}
n_{1}x_{1}=n_{2}x_{2}=\ldots=n_{k}x_{k}=0
\end{equation*}
or
\begin{equation*}
n_{1}x_{1}+n_{2}x_{2}+\ldots+n_{k}x_{k}\neq 0.
\end{equation*}
\begin{thm}\label{kro}
Suppose $E$ is a finite independent subset of a locally compact Abelian group $G$ consisting of elements of infinite order and let $f$ be a function on $E$ satisfying $|f(x)|=1$ for every $x\in E$. Then, for any fixed $\varepsilon>0$, there exists $\gamma\in\widehat{G}$ such that
\begin{equation*}
|\gamma(x)-f(x)|<\varepsilon\text{ for every }x\in E.
\end{equation*}
\end{thm}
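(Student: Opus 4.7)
The plan is to reduce the approximation statement to a density statement in the torus $\mathbb{T}^N$ and then deduce this density from Pontryagin duality together with the hypotheses on $E$. Write $E=\{x_1,\ldots,x_N\}$ and consider the restriction map
\begin{equation*}
\Phi:\widehat{G}\longrightarrow\mathbb{T}^N,\qquad \Phi(\gamma)=\bigl(\gamma(x_1),\ldots,\gamma(x_N)\bigr).
\end{equation*}
Since each $\gamma$ is a homomorphism, $\Phi$ is a continuous group homomorphism and its image $\Phi(\widehat{G})$ is a subgroup of $\mathbb{T}^N$. The target assertion is exactly that, for an arbitrary prescribed vector $(f(x_1),\ldots,f(x_N))\in\mathbb{T}^N$ and an arbitrary $\varepsilon>0$, some element of $\Phi(\widehat{G})$ lies within $\varepsilon$ of it coordinatewise. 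So the whole theorem is equivalent to the statement that $\Phi(\widehat{G})$ is dense in $\mathbb{T}^N$.

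To prove this density, I would argue by contradiction using annihilators. Suppose $\Phi(\widehat{G})$ is not dense; then its closure is a proper closed subgroup of $\mathbb{T}^N$, and by Pontryagin duality applied to $\mathbb{T}^N$ (whose dual is $\mathbb{Z}^N$), there exists a nonzero vector $(m_1,\ldots,m_N)\in\mathbb{Z}^N$ annihilating the closure, i.e.
\begin{equation*}
\prod_{j=1}^{N}\gamma(x_j)^{m_j}=1\qquad\text{for every }\gamma\in\widehat{G}.
\end{equation*}
Using that each $\gamma$ is a character, this rewrites as $\gamma(m_1x_1+\cdots+m_Nx_N)=1$ for every $\gamma\in\widehat{G}$. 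Since the characters of $G$ separate points of $G$ (Pontryagin duality for $G$), this forces
\begin{equation*}
m_1x_1+m_2x_2+\cdots+m_Nx_N=0.
\end{equation*}

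Now the independence hypothesis kicks in. The definition of independence recalled just before the theorem says that the above equality forces $m_jx_j=0$ for every $j$. But every $x_j$ has infinite order, so $m_jx_j=0$ implies $m_j=0$, contradicting the assumption that $(m_1,\ldots,m_N)$ is nonzero. Thus $\Phi(\widehat{G})$ is dense in $\mathbb{T}^N$ and the theorem follows.

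The only delicate step is the passage from "trivial annihilator" to "dense image", which relies on the standard duality between closed subgroups of a compact abelian group and quotients of its (discrete) dual; since $\mathbb{T}^N$ is as concrete as can be, I do not expect any genuine obstacle here — the heart of the argument is that independence together with infinite order is exactly the algebraic hypothesis needed to ensure that the restriction map to the subgroup generated by $E$ has dense image in the appropriate $\mathbb{T}^N$, so Kronecker-type approximation must hold.
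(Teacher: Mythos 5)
Your proof is correct. Note that the paper does not actually prove Theorem \ref{kro}: it is quoted from Rudin (5.1.3 in \cite{r}), so there is no in-paper argument to compare against. Your reduction to the density of $\Phi(\widehat{G})=\{(\gamma(x_1),\ldots,\gamma(x_N)):\gamma\in\widehat{G}\}$ in $\mathbb{T}^N$ is exactly the right reformulation, and each step checks out: the closure of the image is a closed subgroup, a proper closed subgroup of $\mathbb{T}^N$ has a nontrivial annihilator in $\mathbb{Z}^N$, the relation $\prod_j\gamma(x_j)^{m_j}=\gamma(m_1x_1+\cdots+m_Nx_N)=1$ for all $\gamma$ forces $\sum_j m_jx_j=0$ because characters of a locally compact Abelian group separate points, and then independence plus infinite order kills all the $m_j$. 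The route in Rudin is the dual of yours: one extends the prescribed unimodular function $f$ to a character of the free abelian group generated by $E$, then to a character of $G_d$, and approximates it by elements of $\widehat{G}$ using density of $\widehat{G}$ in the Bohr compactification $b\widehat{G}=\widehat{G_d}$. The two arguments are essentially Pontryagin-dual to one another and of comparable length; yours has the advantage of staying entirely inside the concrete group $\mathbb{T}^N$ and avoiding the Bohr compactification, so it is a perfectly good self-contained substitute for the citation.
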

Now, we are ready to prove the theorem on the norm of the inverse of a measure with a discrete part supported on an independent set of points of infinite order.
\begin{thm}\label{nornz}
Let $\mu\in M(G)$ satisfy $\|\mu\|\leq 1$, $\inf_{\gamma\in\widehat{G}}|\widehat{\mu}(\gamma)|=\delta>\frac{1}{2}$ and let the support of $\mu_{d}$ be independent and consist of elements of infinite order. Then $\|\mu^{-1}\|\leq\frac{1}{2\delta-1}$.
\end{thm}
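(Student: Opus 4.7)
The plan is to reduce the theorem to Lemma \ref{latw} by proving that the heaviest atom satisfies $|a_1|\geq\delta$. Once this lower bound is in hand, replacing $\mu$ by $c\mu\ast\delta_{-\tau_1}$, where $c\in\mathbb{C}$ is chosen so that $ca_1=|a_1|$, puts it into the form $|a_1|\delta_0+\nu$ with $\nu(\{0\})=0$; the norm $\|\mu\|\leq 1$, the quantity $\inf_\gamma|\widehat{\mu}(\gamma)|$, and $\|\mu^{-1}\|$ are all invariant under this transformation, so Lemma \ref{latw} delivers $\|\mu^{-1}\|\leq(2\delta-1)^{-1}$.

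To produce this bound on $|a_1|$, I would first use Corollary \ref{dysdu} to upgrade the hypothesis to $|\widehat{\mu_d}(\gamma)|\geq\delta$ for every $\gamma\in\widehat{G}$. The independence hypothesis now enters through Kronecker's theorem (Theorem \ref{kro}): for each $N\in\mathbb{N}$ and $\varepsilon>0$, the subset $\{\tau_1,\ldots,\tau_N\}$ of the support of $\mu_d$ is independent and consists of infinite-order elements, so for any choice of unimodular phases $\varepsilon_n\overline{a_n}/|a_n|$ (on indices $n$ with $a_n\neq 0$) there exists $\gamma\in\widehat{G}$ satisfying $\bigl|\gamma(\tau_n)-\varepsilon_n\overline{a_n}/|a_n|\bigr|<\varepsilon$ for all relevant $n\leq N$. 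Decomposing
\begin{equation*}
\widehat{\mu_d}(\gamma)=\sum_{n=1}^N a_n\gamma(\tau_n)+\sum_{n>N}a_n\gamma(\tau_n),
\end{equation*}
the head lies within $\varepsilon$ of $\sum_{n=1}^N\varepsilon_n|a_n|$ and the tail has modulus at most $R_N:=\sum_{n>N}|a_n|$, which tends to $0$.

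I would execute this construction twice. First, with $\varepsilon_n=(-1)^{n-1}$: passing to the limit $\varepsilon\to 0$, $N\to\infty$ in $|\widehat{\mu_d}(\gamma)|\geq\delta$ yields $\sum_{n=1}^\infty(-1)^{n-1}|a_n|\geq\delta$, and since this alternating sum is non-negative and bounded above by $|a_1|$ (by monotonicity of $(|a_n|)$), I conclude $|a_1|\geq\delta$, as required. For completeness, the second pattern $\varepsilon_1=1$, $\varepsilon_n=-1$ for $n\geq 2$, yields the infinite analogue of the second hypothesis of Lemma \ref{pocz}, namely $\bigl||a_1|-\sum_{n\geq 2}|a_n|\bigr|\geq\delta$; combined with $\sum|a_n|\leq 1$ and $\delta>\tfrac{1}{2}$, the reverse sign is ruled out (it would force $\sum|a_n|\geq 3\delta>\tfrac{3}{2}$) and one recovers the sharper estimate $|a_1|\geq\delta+\sum_{n\geq 2}|a_n|$. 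The only technical point is the simultaneous control of the Kronecker error $\varepsilon$ and the tail $R_N$, which is routine since $\sum|a_n|<\infty$.
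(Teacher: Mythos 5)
Your proposal is correct and follows essentially the same route as the paper: upgrade the hypothesis to the discrete part via Corollary \ref{dysdu}, use Kronecker's theorem (Theorem \ref{kro}) on the independent support to realize prescribed sign patterns in $\widehat{\mu_d}$, deduce $|a_1|\geq\delta$ (your two sign patterns are exactly the two inequalities the paper feeds into Lemma \ref{pocz}), and conclude with Lemma \ref{latw}. The only difference is organizational -- you control the tail $\sum_{n>N}|a_n|$ directly instead of first proving the finite-support case and then passing to the limit as the paper does.
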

\begin{proof}
First of all, by Corollary \ref{dysdu} we have $\inf_{\gamma\in\widehat{G}}|\widehat{\mu_{d}}(\gamma)|\geq\delta$. We will deal first with a simpler case, so let us assume that the support of $\mu_{d}$ is finite, i.e.
\begin{equation*}
\mu_{d}=\sum_{k=1}^{N}a_{k}\delta_{y_{k}}\text{ for some $N\in\mathbb{N}$, complex numbers $a_{k}$ and $y_{k}\in G$, $k=1,2,\ldots,N$}.
\end{equation*}
We can, of course, arrange the numbers $(a_{k})_{k=1}^{N}$ so that $|a_{1}|\geq |a_{2}|\geq\ldots\geq |a_{N}|$. In addition, as convolution of a measure with a Dirac delta and multiplication by a constant of modulus one do not change the norm of a measure, the norm of the inverse and do not violate the estimate $|\widehat{\mu}|\geq\delta$, we can assume, without losing the generality, the following form of $\mu_{d}$:
\begin{equation*}
\mu_{d}=a_{1}\delta_{0}+\sum_{k=2}^{N}a_{k}\delta_{y_{k}} \text{ where $a_{1}>0$}.
\end{equation*}
Let us now fix $\varepsilon>0$ with $\delta-\varepsilon>\frac{1}{2}$, any sequence of signs $(\xi_{n})_{n=2}^{N}$ and let $(\alpha_{k})_{k=2}^{N}$ be a sequence of complex numbers of modulus one such that $\alpha_{k}a_{k}=\xi_{k}|a_{k}|$ for $k=2,\ldots,N$. By Theorem \ref{kro} there is $\gamma_{0}\in\widehat{G}$ such that
\begin{equation}\label{osz}
|\gamma_{0}(y_{k})-\alpha_{k}|\leq\frac{\varepsilon}{\sum_{k=2}^{N}|a_{k}|}\text{ for $k\in\{2,\ldots,N\}$}.
\end{equation}
Now, we perform elementary estimates
\begin{gather*}
|\widehat{\mu_{d}}(\gamma_{0})|-\left|a_{1}+\sum_{k=2}^{N}|a_{k}|\xi_{k}\right|=\left|a_{1}+\sum_{k=2}^{N}a_{k}\gamma_{0}(y_{k})\right|-\left|a_{1}+\sum_{k=2}^{N}|a_{k}|\xi_{k}\right|\leq\\
\leq\left|\left(a_{1}+\sum_{k=2}^{N}a_{k}\gamma_{0}(y_{k})\right)-\left(a_{1}+\sum_{k=2}^{N}|a_{k}|\xi_{k}\right)\right|\leq\sum_{k=2}^{N}\left|a_{k}\gamma_{0}(y_{k})-|a_{k}|\xi_{k}\right|=\\
=\sum_{k=2}^{N}|a_{k}\gamma_{0}(y_{k})-a_{k}\alpha_{k}|=\sum_{k=2}^{N}|a_{k}||\gamma_{0}(y_{k})-\alpha_{k}|\leq\varepsilon\text{ by (\ref{osz})}.
\end{gather*}
Recalling that $|\widehat{\mu_{d}}(\gamma_{0})|\geq\delta$ we get
\begin{equation*}
\left|a_{1}+\sum_{k=2}^{N}|a_{k}|\xi_{k}\right|\geq\delta-\varepsilon.
\end{equation*}
This estimate holds true for every choice of signs $\xi_{k}$ and every sufficiently small $\varepsilon>0$ so we obtained, in fact, the following conclusion:
\begin{equation*}
\left|a_{1}+\sum_{k=2}^{N}|a_{k}|\xi_{k}\right|\geq\delta>\frac{1}{2}\text{ for every choice of signs $\xi_{k}$}
\end{equation*}
Now, we are in position to use Lemma \ref{pocz} which gives
\begin{equation*}
a_{1}\geq\delta+\sum_{k=2}^{N-1}|a_{k}|\geq\delta>\frac{1}{2}.
\end{equation*}
The assertion of the theorem follows now from Lemma \ref{latw}.
\\
Let us move on now to the general case, so let $\mu_{d}$ be given by the formula (we use the same reduction as before):
\begin{equation*}
\mu_{d}=a_{1}\delta_{0}+\sum_{k=2}^{\infty}a_{k}\delta_{y_{k}}\text{ where }
\end{equation*}
$a_{1}>0$, $a_{1}\geq |a_{2}|\geq |a_{3}|\geq ...$ and for every $N\in\mathbb{N}$ the set $\{y_{k}\}_{k=2}^{N}$ is independent, consists of elements of infinite order and $\sum_{n}|a_{n}|<\infty$. Put
\begin{equation*}
\nu_{N}=a_{1}\delta_{0}+\sum_{k=2}^{N}a_{k}\delta_{\tau_{k}}.
\end{equation*}
Fix $\varepsilon>0$ such that $\delta-\varepsilon>\frac{1}{2}$. Then there exists $N\in\mathbb{N}$, $N:=N(\varepsilon)$ such that $\inf_{\gamma\in\widehat{G}}|\widehat{\nu_{N}}(\gamma)|\geq\delta-\varepsilon$. By the initial part of the proof we get
\begin{equation*}
a_{1}\geq\delta-\varepsilon+\sum_{n=2}^{N}|a_{n}|.
\end{equation*}
A moment of consideration (we can assume that $N(\varepsilon)$ tends to $\infty$ as $\varepsilon\rightarrow 0$) leads to
\begin{equation*}
a_{1}\geq \sum_{n=2}^{\infty}|a_{n}|+\delta\geq\delta
\end{equation*}
which finishes the proof.
\end{proof}
The proof of the last theorem gives us a bit more.
\begin{thm}
Let $\mu\in M(G)$ be a measure satisfying the assumptions of Theorem \ref{nornz} and let $a>0$ be the mass of the greatest atom in $\mu_{d}$. Then
\begin{equation*}
\|\mu^{-1}\|\leq\frac{1}{2(\|\mu_{d}\|+\delta-a)-1}
\end{equation*}
\end{thm}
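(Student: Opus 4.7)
The plan is to revisit the proof of Theorem \ref{nornz} and notice that it actually establishes an inequality strictly sharper than the one used there to conclude. After the usual reductions from that proof -- translating the greatest atom to the origin and multiplying $\mu$ by a unimodular constant so that the atom at $0$ has mass $a > 0$ -- the argument produces
\[
 a \geq \delta + \sum_{k=2}^{\infty} |a_k| = \delta + \|\mu_d\| - a,
\]
whereas the statement of Theorem \ref{nornz} only exploits the weaker consequence $a \geq \delta$.

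To prove the present refinement I would feed the sharper quantity $\delta' := \|\mu_d\| + \delta - a$ into Lemma \ref{latw} in place of $\delta$. Two things have to be checked: first, $\delta' \leq a$, which is literally the displayed inequality; second, $\delta' > \frac{1}{2}$, which follows from $\delta' \geq \delta > \frac{1}{2}$ because $\|\mu_d\| \geq a$. Writing $\mu = a \delta_0 + \nu$ with $\nu := \mu_c + \sum_{k \geq 2} a_k \delta_{y_k}$, one has $\nu(\{0\}) = 0$ because $\mu_c$ is continuous and each $y_k \neq 0$; hence the chain $\frac{1}{2} < \delta' \leq a \leq \|\mu\| \leq 1$ verifies the hypotheses of Lemma \ref{latw} with $\lambda = a$. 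The lemma then yields
\[
 \|\mu^{-1}\| \leq \frac{1}{2\delta' - 1} = \frac{1}{2(\|\mu_d\| + \delta - a) - 1},
\]
which is the desired bound.

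I foresee no substantive obstacle: the hard work has already been done inside the proof of Theorem \ref{nornz}, and the present statement amounts to a more careful book-keeping step that avoids weakening the atomic-mass inequality to $a \geq \delta$ before applying Lemma \ref{latw}.
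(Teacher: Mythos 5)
Your proposal is correct and is exactly the argument the paper intends: the paper offers no separate proof, only the remark that ``the proof of the last theorem gives us a bit more,'' meaning precisely that the inequality $a \geq \delta + \sum_{k\geq 2}|a_k| = \delta + \|\mu_d\| - a$ established in the proof of Theorem~\ref{nornz} should be fed into Lemma~\ref{latw} with the sharper constant $\|\mu_d\|+\delta-a$ instead of $\delta$. Your verification of the hypotheses of Lemma~\ref{latw} (in particular $\|\mu_d\|+\delta-a \geq \delta > \tfrac12$ since $\|\mu_d\|\geq a$) is exactly the book-keeping the paper leaves implicit.
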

\begin{rem}
The assumptions of Theorem \ref{nornz} might seem to be quite restrictive but for the classical groups of harmonic analysis: $\mathbb{T}^{n}$ and $\mathbb{R}^{n}$ the situation covered by the theorem is a generic one.
\end{rem}
\subsubsection{Groups of exponent two}
In this section we will deal with a special case of real measures on groups of exponent two (i.e. groups such that $2x=0$ for every $x$), which has connection to the so-called \textit{dyadic analysis}. As it will be shown in Theorem \ref{rzd} we obtain the norm-controlled inversion for every $\delta>\frac{1}{2}$.
\\
We start with finite groups $\mathbb{Z}_{2}^{n}$. For convenience, we introduce the following (non-standard) ordering on $\mathbb{Z}_{2}^{n}:$ for $x,y\in\mathbb{Z}_{2}^{n}=\{0,1\}^{n}$ we define $x<y$ if the last digit of $x$ non-equal to the corresponding digit of $y$ is equal to $0$. This order is sometimes called the colexicographic order; the only difference is that we read the strings of digits from right to left. For example, for $n=3$ we have
\begin{equation*}
000<100<010<110<001<101<011<111.
\end{equation*}
The most important property of this order is that the second half of strings end with $0$, and the second half ends with $1$. What is more, if we have a string ending with $0$, it is easy to find the corresponding string that ends with $1$ -- you need to shift by $2^{n-1}$.

We will also need a precise description of the dual group of $\mathbb{Z}_{2}^{n}$ which is by definition the set of all homomorphisms $f:\mathbb{Z}_{2}^{n}\rightarrow\{-1,1\}$. It is clear that every such $f$ is uniquely determined by its values on the standard system of generators of $\mathbb{Z}_{2}^{n}$: $1000...0$, $01000...0$, ..., $000...1$ and for every choice of signs from $\{-1,1\}^{n}$ we get a homomorphism. Thus we can write that every homomorphism $f\in\widehat{\mathbb{Z}_{2}^{n}}$ is bijectively identified with an element $y\in\mathbb{Z}_{2}^{n}$ by the formula $f_{y}(x)=(-1)^{<x,y>_{n}}$ where $<x,y>_{n}$ is a formal scalar product on $\mathbb{Z}_{2}^{n}$ (for example: $<110,011>_{3}=1\cdot 0+1\cdot 1+0\cdot 1=1$). Thus, the Fourier-Stieltjes transform of $\delta_{x}$ for $x\in\mathbb{Z}_{2}^{n}$ is of the form:
\begin{equation*}
\widehat{\delta_{x}}(y)=(-1)^{<x,y>_{n}}\text{ for }y\in\mathbb{Z}_{2}^{n}.
\end{equation*}
We will use the following two symbolic operations on $\mathbb{Z}_{2}^{n}$:
\begin{enumerate}
  \item $x'$ for $x\in\mathbb{Z}_{2}^{n}$ is the element of $\mathbb{Z}_{2}^{n-1}$ obtained from $x$ by erasing the last digit.
  \item $x|0$, $x|1$ for $x\in\mathbb{Z}_{2}^{n-1}$ is the element of $\mathbb{Z}_{2}^{n}$ obtained from $x$ by inserting $0$ or $1$ (respectively) after the last digit of $x$.
\end{enumerate}

\begin{lem}\label{skondwa}
Let $n$ be a positive integer and let $\mu\in M(\mathbb{Z}_{2}^{n})$ be a real measure satisfying $\|\mu\|\leq 1$ and $|\widehat{\mu}(\gamma)|\geq\delta>\frac{1}{2}$ for every $\gamma\in\mathbb{Z}_{2}^{n}$. Then the mass of the greatest atom in the support of $\mu$ is greater or equal then $\delta$.
\end{lem}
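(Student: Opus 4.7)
The plan is to prove the lemma by induction on $n$; the colexicographic ordering introduced before the statement matches exactly the splitting step (first half of strings ending in $0$ versus second half ending in $1$) that drives the induction. The base case $n=1$ is direct: from $|a_0|+|a_1|\leq 1$ and $|a_0\pm a_1|\geq\delta$ one has $\max(|a_0|,|a_1|)=\tfrac{1}{2}\bigl(|a_0+a_1|+|a_0-a_1|\bigr)\geq\delta$.

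For the inductive step, the plan is to introduce
\[
\tilde\mu_0:=\sum_{x'\in\mathbb{Z}_2^{n-1}}a_{x'|0}\delta_{x'},\qquad
\tilde\mu_1:=\sum_{x'\in\mathbb{Z}_2^{n-1}}a_{x'|1}\delta_{x'},
\]
viewed as real measures on $\mathbb{Z}_2^{n-1}$ (so their Fourier--Stieltjes transforms are real-valued). Using $<x'|b,\,y'|\varepsilon>_n=<x',y'>_{n-1}+b\varepsilon$ one obtains
\[
\widehat{\mu}(y'|0)=\widehat{\tilde\mu_0}(y')+\widehat{\tilde\mu_1}(y'),\qquad
\widehat{\mu}(y'|1)=\widehat{\tilde\mu_0}(y')-\widehat{\tilde\mu_1}(y'),
\]
so setting $p:=\widehat{\tilde\mu_0}(y')$ and $q:=\widehat{\tilde\mu_1}(y')$, the hypothesis gives $|p+q|\geq\delta$ and $|p-q|\geq\delta$ for every $y'\in\mathbb{Z}_2^{n-1}$. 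The elementary real-variable identity $\min(|p+q|,|p-q|)=\bigl||p|-|q|\bigr|$ then forces $\max(|p|,|q|)\geq\delta$ pointwise in $y'$.

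Since $\mu_0$ and $\mu_1$ have disjoint supports, $\|\tilde\mu_0\|+\|\tilde\mu_1\|=\|\mu\|\leq 1$; moreover convolving $\mu$ with the Dirac measure at $0|1$ swaps $\tilde\mu_0$ with $\tilde\mu_1$ while preserving $\|\mu\|$, $|\widehat{\mu}|$ and the family of atomic masses, so I may assume $\|\tilde\mu_1\|\leq\tfrac{1}{2}<\delta$. Then $|\widehat{\tilde\mu_1}(y')|\leq\|\tilde\mu_1\|<\delta$ for every $y'$, and the pointwise $\max$-inequality above forces $|\widehat{\tilde\mu_0}(y')|\geq\delta$ for every $y'$. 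Hence $\tilde\mu_0$ is a real measure on $\mathbb{Z}_2^{n-1}$ satisfying the hypotheses of the lemma, so the inductive hypothesis produces some $x'_0$ with $|a_{x'_0|0}|\geq\delta$, giving the required atom of $\mu$. The only step requiring attention is the real-variable identity $\min(|p+q|,|p-q|)=||p|-|q||$ (a short case check on the signs of $p$ and $q$); with that in hand I foresee no further obstacle.
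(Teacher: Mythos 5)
Your proof is correct, and while it keeps the paper's overall skeleton (induction on $n$, splitting according to the last digit, the same $n=1$ base case), the inductive step is carried out by a genuinely different mechanism. The paper passes to the sum and difference coefficients $b_{x_k}=a_{x_k}+a_{x_{k+2^{n-1}}}$ and $c_{x_k}=a_{x_k}-a_{x_{k+2^{n-1}}}$, checks that \emph{both} resulting measures on $\mathbb{Z}_2^{n-1}$ satisfy the hypotheses (the triangle inequality supplies the norm bound for each), applies the inductive hypothesis twice to get large entries $|b_{x_{k_1}}|\geq\delta$ and $|c_{x_{k_2}}|\geq\delta$, forces $k_1=k_2$ from $2\delta>1$ and the norm constraint, and finishes with the $n=1$ argument on the pair $a_{x_{k_1}},a_{x_{k_1}+2^{n-1}}$. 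You instead keep the two coset restrictions $\tilde\mu_0,\tilde\mu_1$ themselves: your inequalities $|p\pm q|\geq\delta$ are exactly the paper's displays \eqref{pr1} and \eqref{pr2}, but you exploit them through the pointwise identity $\min(|p+q|,|p-q|)=\bigl||p|-|q|\bigr|$ to get $\max(|p|,|q|)\geq\delta$, and then use the pigeonhole $\|\tilde\mu_0\|+\|\tilde\mu_1\|\leq 1$ (after the harmless convolution by $\delta_{0|1}$) to conclude that the smaller half has Fourier transform uniformly below $\delta$, so the larger half inherits the full hypotheses for $n-1$. This needs the inductive hypothesis only once and replaces the reconciliation step $k_1=k_2$ by a norm pigeonhole, which is arguably a little cleaner; the paper's route avoids the WLOG reduction. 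All the auxiliary facts you invoke (the real-variable identities, the additivity of the total variation over the two cosets, and the invariance of the hypotheses under convolution with a Dirac delta) check out.
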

\begin{proof}
Let
\begin{equation*}
\mu=\sum_{k=1}^{2^{n}}a_{x_{k}}\delta_{x_{k}},
\end{equation*}
where the elements $x_{k}\in\mathbb{Z}_{2}^{n}$ are ordered as explained before.

Then, using the introduced notation, our assumption can be rewritten as:
\begin{equation}\label{zaloz}
\forall_{l\in\{1,\ldots, 2^{n}\}}\left|\sum_{k=1}^{2^{n}}(-1)^{<x_{k},x_{l}>_{n}}a_{x_{k}}\right|\geq\delta.
\end{equation}
The proof will proceed by induction on $n$.

The case of $n=1$ is elementary as the inequalities $|a_{x_{1}}+a_{x_{2}}|,|a_{x_{1}}-a_{x_{2}}|\geq\delta$ for real numbers $a_{x_{1}},a_{x_{2}}$ clearly imply that the modulus of one of these numbers is greater than $\delta$.

Suppose that we have already proven the assertion for $s\leq n-1$. By the definition of our order, the last digit of $x_{l}$ is $0$ for $l\in\{1,\ldots,2^{n-1}\}$ which gives for $l\in\{1,\ldots,2^{n-1}\}$:
\begin{equation*}
<x_{l},x_{k}>_{n}=<x_{l}'|0,x_{k}'|0>_{n}=\left\{\begin{array}{c}
                                                   <x_{l}',x_{k}'>_{n-1}\text{ for }k\in\{1,\ldots,2^{n-1}\} \\
                                                   <x_{l}',x_{k-2^{n-1}}'>_{n-1}\text{ for }k\in\{2^{n-1}+1,2^{n}\}
                                                 \end{array}\right.
\end{equation*}
Inserting this to (\ref{zaloz}), we obtain
\begin{equation}\label{pr1}
\forall_{l\in\{1,\ldots,2^{n-1}\}}\left|\sum_{k=1}^{2^{n-1}}(a_{x_{k}}+a_{x_{k+2^{n-1}}})(-1)^{<x_{k}',x_{l}'>_{n-1}}\right|.
\end{equation}
Using similar arguments for elements ending with $1$, we get
\begin{equation}\label{pr2}
\forall_{l\in\{2^{n-1}+1,\ldots,2^{n}\}}\left|\sum_{k=1}^{2^{n-1}}(a_{x_{k}}-a_{x_{k+2^{n-1}}})(-1)^{<x_{k}',x_{l}'>_{n-1}}\right|.
\end{equation}
We perform a substitution $b_{x_{k}}:=a_{x_{k}}+a_{x_{k+2^{n-1}}}$, $c_{k}:=a_{x_{k}}-a_{x_{k+2^{n-1}}}$ for $k\in\{1,\ldots,2^{n-1}\}$. It is straightforward to verify that the original assertion (for $n$) reduces to two analogous problems of reduced order ($n-1$). Hence we are allowed to use the inductive hypothesis, which yields $|b_{x_{k_{1}}}|\geq\delta$ and $|c_{x_{k_{2}}}|\geq\delta$ for some $k_{1},k_{2}\in\{1,\ldots,2^{n-1}\}$. If $k_{1}\neq k_{2}$, we get a contradiction with the norm assumption: $1\geq |b_{x_{k_{1}}}|+|c_{x_{k_{2}}}|\geq 2\delta>1$; indeed, note that $|b_{x_{k_1}}| + |c_{k_2}| \leqslant |a_{x_{k_1}}| + |a_{x_{k_1+2^{n-1}}}| + |a_{x_{k_2}}| + |a_{x_{k_2 +2^{n-1}}}|$. Thus $k_{1}=k_{2}$ and it is enough to use the argument for $n=1$ to finish the proof.
\end{proof}
\begin{thm}\label{rzd}
Let $G$ be a locally compact Abelian group satisfying $2x=0$ for every $x\in G$. Let $\mu\in M(G)$ be a real measure such that $\|\mu\|\leq 1$, $|\widehat{\mu}(\gamma)|\geq\delta>0$ for every $\gamma\in\widehat{G}$. Then the mass of the greatest atom in the support of $\mu_{d}$ is greater than or equal to $\delta$. Moreover, $\mu$ is invertible and $\|\mu^{-1}\|\leq\frac{1}{2\delta-1}$.
\end{thm}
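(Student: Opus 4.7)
The strategy is to bootstrap from the finite-dimensional case established in Lemma \ref{skondwa} via an approximation of $\mu_d$ by truncations supported on finitely generated subgroups of $G$; once the inequality $|a_1| \geq \delta$ is secured for the largest atom, the norm-controlled inversion follows at once from Lemma \ref{latw}.

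As in the proof of Lemma \ref{latwy}, Corollary \ref{dysdu} yields $\inf_{\gamma \in \widehat{G}} |\widehat{\mu_d}(\gamma)| \geq \delta$, and since $\mu$ is real so is $\mu_d$; write $\mu_d = \sum_{n=1}^{\infty} a_n \delta_{x_n}$ with $a_n \in \mathbb{R}$, $|a_1| \geq |a_2| \geq \cdots$, and $\sum_n |a_n| \leq 1$. For each $N \in \mathbb{N}$, let $H_N$ be the subgroup of $G$ generated by $\{x_1, \ldots, x_N\}$. Because every element of $G$ has order at most $2$, the group $H_N$ is finite and isomorphic to $\mathbb{Z}_2^{k(N)}$ for some $k(N) \leq N$. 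The truncation $\nu_N := \sum_{n=1}^N a_n \delta_{x_n}$ is a real measure on $H_N$ with $\|\nu_N\| \leq \|\mu_d\| \leq 1$. Standard Pontryagin duality guarantees that every $\gamma \in \widehat{H_N}$ extends to some $\tilde{\gamma} \in \widehat{G}$, and therefore
\begin{equation*}
\left| \widehat{\nu_N}(\gamma) - \widehat{\mu_d}(\tilde{\gamma}) \right| = \left| \sum_{n > N} a_n \tilde{\gamma}(x_n) \right| \leq \varepsilon_N,
\end{equation*}
where $\varepsilon_N := \sum_{n > N} |a_n| \to 0$. In particular $|\widehat{\nu_N}(\gamma)| \geq \delta - \varepsilon_N$ uniformly on $\widehat{H_N}$.

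For $N$ large enough that $\delta - \varepsilon_N > \tfrac{1}{2}$, Lemma \ref{skondwa} applied to $\nu_N$ gives $|a_1| = \max_{1 \leq n \leq N} |a_n| \geq \delta - \varepsilon_N$, and letting $N \to \infty$ produces the first assertion $|a_1| \geq \delta$. For the norm estimate, choose $c \in \{-1, +1\}$ with $c a_1 = |a_1|$ and set $\mu' := c \, \mu \ast \delta_{-x_1}$. Then $\|\mu'\| \leq 1$, the atom of $\mu'$ at $0$ has mass $|a_1| \geq \delta$, and no other atom of $\mu'$ sits at $0$ since the $x_n$ are distinct and $\mu_c$ is continuous. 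Lemma \ref{latw} therefore yields $\|\mu'^{-1}\| \leq (2\delta - 1)^{-1}$, and since $\mu^{-1} = c \, \delta_{-x_1} \ast \mu'^{-1}$ the same bound transfers to $\|\mu^{-1}\|$. The crux of the argument is the passage from the general exponent-two group $G$ to $\mathbb{Z}_2^{k(N)}$, which is handled cleanly by the extension of characters from closed subgroups; everything else amounts to uniform control of the tail of $\mu_d$ and the standard reduction to an atom at the origin.
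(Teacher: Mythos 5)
Your proof is correct and follows essentially the same route as the paper: truncate $\mu_d$ to finitely many atoms, view the truncation as a measure on a finite group of exponent two, apply Lemma \ref{skondwa}, let the truncation parameter tend to infinity, and finish with Lemma \ref{latw}. You are in fact a bit more explicit than the paper about the one point it glosses over --- that characters of the finite subgroup $H_N$ extend to $\widehat{G}$, so the lower bound $|\widehat{\nu_N}|\geq\delta-\varepsilon_N$ really holds on all of $\widehat{H_N}$ --- which is a welcome clarification rather than a deviation.
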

\begin{proof}
By Theorem \ref{dysk} we have $|\widehat{\mu_{d}}(\gamma)|\geq\delta$ for every $\gamma\in\widehat{G}$ and obviously $\|\mu_{d}\|\leq 1$. Let $\mu_{d}$ have the following form:
\begin{equation}\label{czdysk}
\mu_{d}=\sum_{k=1}^{\infty}a_{k}\delta_{x_{k}}\text{ for some }a_{k}\in\mathbb{R}\text{ and }x_{k}\in G.
\end{equation}
Fix $\varepsilon>0$ such that $\delta-\varepsilon>\frac{1}{2}$ and find $N_{\varepsilon}\in\mathbb{N}$ for which
\begin{equation*}
\sum_{k=N_{\varepsilon}+1}^{\infty}|a_{k}|<\varepsilon.
\end{equation*}
Define $\mu_{\varepsilon}$ as a sum as in (\ref{czdysk}) from $k=1$ to $N_{\varepsilon}$. Then clearly $\|\mu_{\varepsilon}\|\leq 1$ and $|\widehat{\mu_{\varepsilon}}(\gamma)|\geq\delta-\varepsilon>\frac{1}{2}$ for every $\gamma\in\widehat{G}$. Now it is enough to observe that $\mu_{\varepsilon}$ may be treated as an element of $M(\mathbb{Z}_{2}^{N_{\varepsilon}})$ and apply Lemma \ref{skondwa}. To finish the proof we pass with $\varepsilon$ to zero and use Lemma \ref{latw}.
\end{proof}
\section{Fourier--Stieltjes algebras}
We switch now to Fourier--Stieltjes algebras (the standard reference for this part is \cite{ey}, see also a recent monograph \cite{kl}). Let $G$ be a locally compact group and let $B(G)$ be the Fourier--Stieltjes algebra of the group $G$ i.e. the linear span of  continuous positive-definite functions on $G$ equipped with the norm given by the duality $B(G)=\left(C^{\ast}(G)\right)^{\ast}$, where $C^{\ast}(G)$ is the full group $C^{\ast}$-algebra of $G$. It is well-known that $B(G)$ with pointwise product is a commutative semisimple unital Banach algebra with $G$ embedded in $\triangle(B(G))$ via evaluation functionals. A continuous bounded function on $G$ is called (weakly) almost periodic if the set of all its translates is precompact in the (weak) uniform topology, the set of all such functions will be denoted by $AP(G)$ ($WAP(G)$). A classical result of Eberlein (see \cite{e} for the original work and \cite{gl} for a different proof) states that $B(G)\subset WAP(G)$. Moreover, there exists an invariant mean $M\in \left(WAP(G)\right)^{\ast}$ (the existence of the mean and related matters are discussed in detail in the book \cite{b}). Restricting this mean to $B(G)$ we obtain a decomposition $B(G)=B_{c}(G)\oplus (B(G)\cap AP(G))$ where $B_{c}(G)=\{f\in B(G):M(|f|)=0\}$ is a closed ideal in $B(G)$ and $B(G)\cap AP(G)$ is a closed subalgebra. Here $B_{c}(G)$ is the analogue of the ideal of continuous measures and $B(G)\cap AP(G)$ is the analogue of the subalgebra of discrete measures for commutative groups. The investigations on Fourier--Stieltjes algebras are natural generalizations of those concerning measure algebras, as in case of an Abelian group $G$ we have a canonical isomorphism of $B(G)$ with $M(\widehat{G})$.

As we work already `on the Fourier transform side', the variant of Problem \ref{pier} can be rephrased in our context as follows.
\begin{prob}\label{dru}
Let $G$ be a locally compact group and let $f\in B(G)$ satisfy $\|f\|\leq 1$ and $\inf_{x\in G}|f(x)|=\delta>0$.  What is the minimal value of $\delta_{0}>0$ such that for any $\delta>\delta_{0}$ the element $f$ is automatically invertible? What can be said about the norm of the inverse?
\end{prob}
\subsection{Qualitative result}
The inspection of the proof of Theorem \ref{glop} shows that in order to obtain an analogue for Fourier--Stieltjes algebras (and answer the first question from Problem \ref{dru}) we need only to show the version of Theorem \ref{dysk} for Fourier-Stieltjes algebras (indeed, we have only used the basic facts from Gelfand theory, the orthogonal decomposition of a measure into continuous and discrete part and the density of $\widehat{G}$ in $\triangle(M_{d}(G))$ -- all of these facts have standard counterparts for Fourier--Stieltjes algebras). However, the original proof cannot be repeated verbatim and we use a different approach in the second part of the argument.
\begin{thm}
Let $G$ be a locally compact group and let $f\in B(G)$ have the decomposition $f=g+h$ with $g\in \left(AP(G)\cap B(G)\right)$ and $h\in B_{c}(G)$. Then $g(G)\subset\overline{f(G)}$.
\end{thm}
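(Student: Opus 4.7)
My plan is to find, for every $x_0\in G$ and every $\varepsilon>0$, some $y\in G$ with $|f(y)-g(x_0)|<\varepsilon$. The strategy is to look for $y$ inside a relatively dense (syndetic) subset of $G$ on which $g$ is nearly equal to $g(x_0)$, and then to argue that $h\in B_c(G)$ cannot stay bounded away from zero on any such set.

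Since $g\in AP(G)$, the standard theory of almost periodic functions yields that the set of $\varepsilon/2$-almost-periods $T_{\varepsilon/2}:=\{t\in G:\|L_t g-g\|_\infty<\varepsilon/2\}$ is relatively dense in $G$; consequently $S:=x_0\,T_{\varepsilon/2}$ is syndetic and every $y\in S$ satisfies $|g(y)-g(x_0)|<\varepsilon/2$.

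The crucial claim is: \emph{if $S\subset G$ is syndetic and $h\in B_c(G)$, then $\inf_{y\in S}|h(y)|=0$}. I would prove it by contradiction: assume $|h|\ge\eta>0$ on $S$ and fix a compact $K$ with $KS=G$. Using that the orbit map $k\mapsto L_k h$ is sup-norm continuous on $G$ (a standard consequence of the strong continuity of the unitary representation that realises $h$ as a matrix coefficient), one covers the compact $K$ by finitely many open sets on which translates of $h$ differ in sup norm by less than $\eta/2$; choosing representative points $k_1,\dots,k_n$, one checks that for every $y\in G$ some index $i$ satisfies $|h(k_i^{-1}y)|\ge\eta/2$, whence
\[
N(y):=\sum_{i=1}^{n}|h(k_i^{-1}y)|^{2}\ge\frac{\eta^{2}}{4}\qquad\text{for all }y\in G.
\]
On the other hand, $B_c(G)$ is an ideal in $B(G)$ and $\bar h\in B(G)$, so $|h|^{2}=h\bar h\in B_c(G)$ and hence $M(|h|^{2})=0$; translation invariance of $M$ then yields $M(N)=n\,M(|h|^{2})=0$, contradicting $N\ge\eta^{2}/4$ (since $M$ is positive with $M(\mathbf{1})=1$).

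Combining the two ingredients, the lemma applied to $S$ produces $y\in S$ with $|h(y)|<\varepsilon/2$, whence $|f(y)-g(x_0)|\le|g(y)-g(x_0)|+|h(y)|<\varepsilon$, which gives the result. The main hurdle is the mean-theoretic contradiction above: it relies crucially on $|h|^{2}$ lying in the ideal $B_c(G)$ (giving $M(|h|^{2})=0$) and on sup-norm continuity of the translation action on $B(G)$, which is what lets the finite cover turn the pointwise lower bound on $|h|$ along $S$ into a uniform lower bound on $N$.
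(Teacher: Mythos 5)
Your argument is correct, but it is organised differently from the paper's. The paper fixes $g(e)$, iteratively produces distinct points $x_{1},x_{2},\dots$ with $|h(x_{n}x_{j}^{-1})|<\varepsilon$ for all $j<n$ (at each step applying $\inf_{x\in G}|u(x)|=0$ to the auxiliary function $u(x)=\sum_{j}|h(xx_{j}^{-1})|^{2}\in B_{c}(G)$), and then uses precompactness of the orbit of $g$ to extract a Cauchy subsequence, so that some quotient $x_{n_{k+1}}x_{n_{k}}^{-1}$ is a point where $g$ is $\varepsilon$-close to $g(e)$ and $h$ is $\varepsilon$-small. You instead first localise $g$ near $g(x_{0})$ on a syndetic set of almost periods (which follows from orbit precompactness by the same finite-covering idea), and then prove the stronger intermediate statement that an element of $B_{c}(G)$ cannot be bounded away from zero on any syndetic set, using positivity and translation invariance of the mean applied to $N=\sum_{i}|h(k_{i}^{-1}\cdot)|^{2}$. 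Note that both proofs ultimately hinge on the same device --- a finite sum of squared moduli of translates of $h$ lying in the ideal $B_{c}(G)$ and therefore having mean zero --- but you package it as a standalone lemma about syndetic sets, which is cleaner and of independent interest, at the cost of invoking a little more machinery (the relative density of $\varepsilon$-almost periods and the sup-norm continuity of $k\mapsto L_{k}h$, both of which you justify correctly); the paper's version needs only the single fact $\inf_{G}|u|=0$ for nonnegative $u\in B_{c}(G)$ and a pigeonhole on the orbit of $g$. The only point to tidy up is the left/right convention: with $T_{\varepsilon/2}$ defined via left translates $L_{t}g=g(t\,\cdot)$ the correct syndetic set is $T_{\varepsilon/2}\,x_{0}$ rather than $x_{0}\,T_{\varepsilon/2}$ (or else define $T_{\varepsilon/2}$ via right translates); since left and right orbit precompactness coincide for almost periodic functions, this is purely notational.
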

\begin{proof}
Without loss of generality, it is enough to show $g(e)\in \overline{f(G)}$ (the general case being obtained via considering shifted $f$). First we prove the existence of distinct group elements $(x_{n})_{n=1}^{\infty}$ satisfying
\begin{equation}\label{grp}
|h(x_{1})|<\varepsilon\text{ and }|h(x_{n}x_{j}^{-1})|<\varepsilon\text{ for $j<n$ and $n>1$}.
\end{equation}
Since $h\in B_{c}(G)$ and the mean $M$ satisfies $M(f)\geq 0$ if $f\geq 0$ we clearly have $\inf_{x\in G}|h(x)|=0$ so there is no problem with the choice of $x_{1}$. Suppose that we have already picked $x_{1},\ldots,x_{n-1}$ and consider an auxiliary function $u\in B_{c}(G)$ defined as follows:
\begin{equation*}
u(x)=\sum_{j=1}^{n-1}|h(xx_{j}^{-1})|^{2}.
\end{equation*}
As $u\in B_{c}(G)$ w also have $\inf_{x\in G}|u(x)|=0$ and we are able to choose $x_{n}$ different from $x_{1},\ldots,x_{n-1}$ or $|h(e)|<\varepsilon$ which immediately finishes the proof.
\\
Let us consider now the following set of functions in $B(G)\cap AP(G)$.
\begin{equation*}
X=\{g_{x_{n}}:n\in\mathbb{N}\}\text{ where ${g}_{x}(y)=g(xy)$ for $x,y\in G$}.
\end{equation*}
Since $g\in AP(G)$ this set is relatively compact in the uniform topology. Hence there exists a subsequence $(g_{x_{n_{k}}})_{k\in\mathbb{N}}$ which is a Cauchy sequence. It follows that for $k>N$ we have
\begin{equation*}
\|g_{x_{n_{k+1}}}-g_{x_{n_{k}}}\|_{\infty}<\varepsilon.
\end{equation*}
In particular,
\begin{equation*}
|g_{x_{n_{k+1}}}(x_{n_{k}}^{-1})-g_{x_{n_{k}}}(x_{n_{k}}^{-1})|=|g(x_{n_{k+1}}x_{n_{k}}^{-1})-g(e)|<\varepsilon.
\end{equation*}
But now,
\begin{equation*}
|f(x_{n_{k+1}}x_{n_{k}}^{-1})-g(e)|=|h(x_{n_{k+1}}x_{n_{k}}^{-1})+g(x_{n_{k+1}}x_{n_{k}}^{-1})-g(e)|\leq 2\varepsilon.
\end{equation*}
\end{proof}
For completeness, let us state a variant of Theorem \ref{glop} for Fourier-Stieltjes algebras.
\begin{thm}\label{fsa}
Let $G$ be a locally compact group and let $f\in B(G)$ satisfy $\|f\|\leq 1$ and $\inf_{x\in G}|f(x)|>\frac{1}{2}$. Then $f$ is invertible.
\end{thm}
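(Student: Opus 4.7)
The plan is to mimic the proof of Theorem \ref{glop} line by line, with the theorem just established (the Glicksberg--Wik analogue for $B(G)$) playing the role of Theorem \ref{dysk}. I would write $f = g + h$ with $g \in B(G) \cap AP(G)$ and $h \in B_c(G)$; the previous theorem immediately gives $g(G) \subseteq \overline{f(G)}$, hence
\begin{equation*}
\inf_{x \in G} |g(x)| \geq \inf_{x \in G} |f(x)| > \frac{1}{2}.
\end{equation*}

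Next I would pass from $G$ to the full Gelfand space $\triangle(B(G))$. The Gelfand space $\triangle(B(G) \cap AP(G))$ is canonically identified with the Bohr compactification $bG$, on which $g$ extends to a continuous function whose image is $\overline{g(G)}$ (by density of $G$ in $bG$). Consequently $\inf|\sigma_{B(G) \cap AP(G)}(g)| = \inf_G|g| > 1/2$. Since $B(G) \cap AP(G)$ is a unital subalgebra of $B(G)$, the elementary inclusion of spectra $\sigma_{B(G)}(g) \subseteq \sigma_{B(G) \cap AP(G)}(g)$ upgrades this to
\begin{equation*}
|\varphi(g)| > \frac{1}{2} \quad \text{for every } \varphi \in \triangle(B(G)),
\end{equation*}
which is the exact analogue of (\ref{dys}).

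Using that the decomposition $B(G) = (B(G) \cap AP(G)) \oplus B_c(G)$ is isometric in the $\ell^1$-sense (the natural counterpart of $\|\mu\| = \|\mu_d\| + \|\mu_c\|$), I would combine $\|g\| \geq r(g) > 1/2$ with $\|f\| \leq 1$ to conclude $\|h\| = \|f\| - \|g\| < 1/2$. The argument then terminates as in Lemma \ref{latwy}: if $f$ were not invertible, pick $\varphi_0 \in \triangle(B(G))$ with $\varphi_0(f) = 0$; then $\varphi_0(g) = -\varphi_0(h)$ yields $1/2 < |\varphi_0(g)| = |\varphi_0(h)| \leq \|h\| < 1/2$, a contradiction.

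The main obstacle is hidden in the author's phrase \emph{standard counterparts}: one needs the decomposition induced by the invariant mean $M$ on $WAP(G)$ to be $\ell^1$-additive on $B(G)$, the non-abelian substitute for the disjoint-support argument in the measure-algebra case. Once this $\ell^1$-additivity and the inclusion $g(G) \subseteq \overline{f(G)}$ are granted, the proof is essentially a transcription of the qualitative argument from Lemma \ref{latwy}.
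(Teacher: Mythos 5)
Your proposal is correct and is essentially the paper's own (unwritten) argument: the authors explicitly reduce Theorem \ref{fsa} to the $B(G)$-analogue of Theorem \ref{dysk} proved just before it, plus a verbatim transcription of Lemma \ref{latwy} using the decomposition $B(G)=B_c(G)\oplus(B(G)\cap AP(G))$, the density of $G$ in $\triangle(B(G)\cap AP(G))$, and the spectral inclusion for the unital subalgebra $B(G)\cap AP(G)$ --- exactly the steps you carry out. You also correctly isolate the one ingredient the paper hides under ``standard counterparts,'' namely the norm-additivity $\|f\|=\|g\|+\|h\|$ of the almost periodic/continuous splitting, which is supplied by the orthogonality results from \cite{ow} quoted later in the paper.
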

\subsection{Quantitative results}
The aim of this short section is to show that Nikolski's estimate for $\delta > \frac{1}{\sqrt{2}}$ can be achieved for Fourier-Stieltjes algebras as well. In order to do that, we need to recall some facts about absolute continuity and $L$-spaces from \cite{ow}.
\begin{de}[{cf. \cite[Definition 2.]{ex} and \cite[Definition 4.7]{ow}}]
Let $G$ be a locally compact group and let $f,g \in B(G)$. Let $\mathrm{zs}(f)$ and $\mathrm{zs}(g)$ be the central supports of $f$ and $g$, respectively, where $f$ and $g$ are viewed as functionals on $C^{\ast}(G)$. We say that $f$ is \textbf{absolutely continuous} with respect to $g$ (denoted by $f \ll g$) if $\mathrm{zs}(f) \leq \mathrm{zs}(g)$. We call $f$ and $g$ \textbf{mutually singular} if $\mathrm{zs}(f) \mathrm{zs}(g) = 0$.
\end{de}
\begin{prop}[{cf. \cite[Proposition 4.18]{ow} and \cite[Corollary 4.19]{ow}}]
Let $X\subset B(G)$ be a closed subspace that is left and right invariant by the action of $G$. Then there exists a central projection $z \in W^{\ast}(G):= (C^{\ast}(G))^{\ast\ast}$ such that $X= z\cdot B(G)$. Consequently, we have a splitting $B(G) = X \oplus X^{\perp}$, where $X^{\perp}:= (1-z) \cdot B(G)$, which is \textbf{orthogonal}, i.e. all elements from $X^{\perp}$ are mutually singular with elements from $X$.
\end{prop}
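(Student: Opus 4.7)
My plan is to pass to the annihilator $X^{\circ}:=\{a\in W^{\ast}(G):\langle f,a\rangle=0\text{ for all }f\in X\}\subset W^{\ast}(G)=B(G)^{\ast}$ and exhibit it as an ultraweakly closed two-sided ideal of $W^{\ast}(G)$; the structure theorem for von Neumann algebras will then hand me the required central projection. Since $X$ is norm-closed in $B(G)$, the bipolar theorem gives $X=(X^{\circ})_{\perp}$, so it suffices to understand $X^{\circ}$.

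Under the canonical $W^{\ast}(G)$-bimodule structure on the predual $B(G)$, the hypothesis that $X$ is invariant under left and right translations by $G$ translates to $s\cdot X\subset X$ and $X\cdot s\subset X$ for every $s\in G$. Dualising, $X^{\circ}$ is closed under left and right multiplication by each $s\in G$. Because the linear span of $G$ is ultraweakly dense in $W^{\ast}(G)$, multiplication on $W^{\ast}(G)$ is separately ultraweakly continuous, and $X^{\circ}$ is automatically ultraweakly closed, this invariance propagates to full $W^{\ast}(G)$-bimodule invariance. Hence $X^{\circ}$ is a two-sided ultraweakly closed ideal of $W^{\ast}(G)$, so by the standard structure theorem $X^{\circ}=(1-z)W^{\ast}(G)$ for a unique central projection $z\in W^{\ast}(G)$.

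A short computation using centrality of $z$ then identifies $X$: the condition $f\in(X^{\circ})_{\perp}$ is equivalent, via the bimodule pairing $\langle f,(1-z)a\rangle=\langle(1-z)\cdot f,a\rangle$, to $(1-z)\cdot f=0$, i.e.\ $f=z\cdot f$. This yields $X=z\cdot B(G)$, and the analogous description $X^{\perp}=(1-z)\cdot B(G)$, together with $z+(1-z)=1$ and $z(1-z)=0$, gives the splitting $B(G)=X\oplus X^{\perp}$. For mutual singularity, the central supports satisfy $\mathrm{zs}(f)\leq z$ for any $f\in z\cdot B(G)$ and $\mathrm{zs}(g)\leq 1-z$ for any $g\in(1-z)\cdot B(G)$, so $\mathrm{zs}(f)\,\mathrm{zs}(g)\leq z(1-z)=0$. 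The step I expect to require most care is the promotion of $G$-bi-invariance of $X^{\circ}$ to full $W^{\ast}(G)$-bi-invariance: one must combine ultraweak density of $\mathrm{span}(G)$ in the universal enveloping von Neumann algebra with separate ultraweak continuity of multiplication, together with the fact that $X^{\circ}$ is ultraweakly closed as an annihilator; the remainder of the argument is bookkeeping around the centrality of $z$.
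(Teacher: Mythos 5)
Your argument is correct. Note that the paper does not actually prove this proposition but imports it from \cite{ow} (Proposition 4.18 and Corollary 4.19), and the proof given there is essentially the duality argument you describe: translation invariance of $X$ passes to its annihilator $X^{\circ}$ in $W^{\ast}(G)=B(G)^{\ast}$, the ultraweak density of $\mathrm{span}\,G$ together with separate ultraweak continuity of multiplication upgrades $G$-bi-invariance of the ultraweakly closed subspace $X^{\circ}$ to a two-sided ideal, and the structure theorem for ultraweakly closed ideals plus the bipolar theorem yields $X=z\cdot B(G)$ and the orthogonal splitting.
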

We are now able to show the following fact.
\begin{prop}\label{Prop:jedynka}
Let $f \in B(G)$. Then there exists an orthogonal decomposition $f = c\mathds{1} + f_0$, where $f_0$ and $\mathds{1}$ are mutually singular. Moreover, $c=m(f)$, where $m$ is the unique invariant mean on $B(G)$.
\end{prop}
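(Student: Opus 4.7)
The plan is to deduce the decomposition directly from the preceding proposition and then identify the scalar $c$ by showing that the central projection it produces coincides with the invariant mean $m$.

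First, observe that $X:=\mathbb{C}\mathds{1}$ is a one-dimensional (hence closed) subspace of $B(G)$ which is left- and right-invariant under $G$, since every translate of $\mathds{1}$ equals $\mathds{1}$. Applying the preceding proposition to $X$ yields a central projection $z\in W^{\ast}(G)$ with $\mathbb{C}\mathds{1}=z\cdot B(G)$ and an orthogonal splitting $B(G)=\mathbb{C}\mathds{1}\oplus(1-z)\cdot B(G)$. Writing $f=z\cdot f+(1-z)\cdot f$, the first summand lies in $\mathbb{C}\mathds{1}$ and so equals $c\mathds{1}$ for a unique scalar $c$, while $f_{0}:=(1-z)\cdot f$ is mutually singular with $\mathds{1}$ by the quoted proposition. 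This already supplies the asserted decomposition.

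It remains to identify $c$ with $m(f)$. The strategy is to show that $z$, viewed as an element of $W^{\ast}(G)=B(G)^{\ast}$ via the canonical pairing $\langle z,g\rangle=g(z)$, is itself an invariant mean; the uniqueness of $m$ built into the statement then forces $z=m$. For this I would check in turn that (i) $z$ is a state on $B(G)$ (positivity follows from $z=z^{\ast}z$ in $W^{\ast}(G)$, so $\langle z,g\rangle\geq 0$ whenever $g$ is positive-definite, and $\langle z,\mathds{1}\rangle=1$ because $z\cdot\mathds{1}=\mathds{1}$); and (ii) $z$ is translation-invariant on $B(G)$, which is immediate from the centrality of $z$ combined with the fact that the $G$-action on $B(G)$ is implemented by conjugation with the canonical unitaries in $W^{\ast}(G)$. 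Once $m=z$ is available, pairing $f=c\mathds{1}+f_{0}$ with $z$ and using $z(1-z)=0$ gives $m(f)=c\langle z,\mathds{1}\rangle+\langle z,(1-z)\cdot f\rangle=c$.

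The main obstacle, as I see it, is the identification $m=z$: the existence of the decomposition and the mutual singularity of $f_{0}$ and $\mathds{1}$ are direct corollaries of the preceding proposition, but showing that the abstract central projection supplied by that proposition really is the invariant mean requires juggling three different perspectives on $z$ (as a projection in $W^{\ast}(G)$, as a module multiplier on $B(G)$, and as a functional on $B(G)$). Should the positivity/invariance verification turn out to be inconvenient, an alternative route is to bypass the identification altogether and argue directly that $m$ annihilates every element mutually singular with $\mathds{1}$, by observing that all translates of $f_{0}$ remain in the closed $G$-invariant subspace $(1-z)\cdot B(G)$, which meets $\mathbb{C}\mathds{1}$ only in $0$, so the constant picked out by $m$ on the orbit of $f_{0}$ must vanish.
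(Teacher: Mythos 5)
Your existence argument is exactly the paper's: apply the preceding proposition to the translation-invariant subspace $\mathbb{C}\mathds{1}$ and read off $f=c\mathds{1}+f_{0}$ with $f_{0}=(1-z)\cdot f$. Where you genuinely diverge is the identification $c=m(f)$. The paper never works with $z$ as a functional: it sets $\varphi(f):=c$, uses uniqueness of the decomposition to check that $\varphi$ is linear, bounded, unital and translation-invariant, and then proves a separate lemma (via the structure of $G$-invariant subspaces as $z'\cdot B(G)$) that \emph{any} unital translation-invariant linear functional on $B(G)$ equals the invariant mean -- deliberately sidestepping positivity, which the authors say they cannot verify directly. You instead show that evaluation at $z$ is itself an invariant mean and invoke uniqueness of the mean. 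Both routes work; yours is more concrete, while the paper's uniqueness lemma avoids any positivity discussion and is a reusable statement in its own right.

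One step of yours needs repair. Invariance of $g\mapsto g(z)$ does \emph{not} follow from ``centrality of $z$ plus the $G$-action being conjugation by canonical unitaries'': left and right translation on $B(G)$ are dual to \emph{one-sided} multiplication by $u_{g}$ on $W^{\ast}(G)$, not to conjugation, so centrality only gives $\langle g\cdot f,z\rangle=\langle f,zu_{g}\rangle$, which is not yet $\langle f,z\rangle$. What saves the claim is that $z$ is the central support of the trivial representation: $z\cdot B(G)=\mathbb{C}\mathds{1}$ forces $zW^{\ast}(G)\cong\mathbb{C}$ with $u_{g}z=zu_{g}=z$ for all $g$, and this stronger identity does yield invariance. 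With that correction (and granting that the positivity you verify, on positive-definite functions, matches the notion of mean in the uniqueness theorem you invoke -- otherwise you should fall back on the paper's observation that uniqueness already holds among unital invariant functionals), your argument closes. Your alternative route at the end is also viable, but it needs the relative weak compactness of orbits in $B(G)$ (the Eberlein/Graham--Lau facts the paper cites) to guarantee that the constant produced by the mean lies in the $B(G)$-norm-closed convex hull of translates of $f_{0}$, hence in $(1-z)\cdot B(G)$, forcing it to vanish.
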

\begin{proof}
Since the subspace consisting of constant functions is $G$-invariant, we get an orthogonal decomposition by the previous proposition. Denote the constant appearing therein by $\varphi(f)$. By uniqueness of the decomposition, it is easy to see that $\varphi: B(G) \to \mathbb{C}$ is a linear functional that satisfies $\varphi(\mathds{1})=1$. Since the decomposition is orthogonal, we get $\|f\| = |\varphi(f)| + \|f_0\|$, so $\varphi$ is a continuous functional. Note also that $\varphi$ is $G$-invariant (from both left and right). Indeed, for any $g\in G$ the equality $f = \varphi(f) \mathds{1} + f_0$ implies that $g\cdot f = \varphi(f)\mathds{1} + g\cdot f_0$, but also $g\cdot f =\varphi(g\cdot f) \mathds{1} + (g\cdot f)_0$, hence $\varphi(g\cdot f) = \varphi(f)$ by uniqueness; in the same way we obtain invariance from the right. This is almost exactly the definition of an invariant mean, but we do not know if $\varphi$ is a positive functional. We will not show this directly; in the next lemma we will prove that there exists a unique $G$-invariant functional that attains the value $1$ at $\mathds{1}$, hence it has to coincide with the invariant mean.
\end{proof}
\begin{lem}[{see also \cite[Proposition 4.10]{ll}}]
Let $\varphi\colon B(G) \to \mathbb{C}$ be a linear functional such that $\varphi(\mathds{1})=1$ and $\varphi(f) = \varphi(g\cdot f)=\varphi(f\cdot g)$ for any $g\in G$. Then it is equal to the unique invariant mean on $B(G)$.
\end{lem}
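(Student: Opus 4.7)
The plan is to establish that any linear functional $\varphi$ satisfying $\varphi(\mathds{1})=1$ and two-sided $G$-invariance necessarily coincides with the invariant mean $m$, by reducing everything to the orthogonal decomposition of Proposition \ref{Prop:jedynka}. First I would write $f = m(f)\mathds{1} + f_0$ with $f_0 \in \mathds{1}^{\perp}$; by linearity one has $\varphi(f) = m(f)\varphi(\mathds{1}) + \varphi(f_0) = m(f) + \varphi(f_0)$, so everything boils down to proving $\varphi(f_0) = 0$ for every $f_0$ in the orthogonal complement of the constants (which, by the previous proposition, is exactly $\ker m$).

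To handle such an $f_0$, I would invoke Eberlein's theorem that $B(G)\subset WAP(G)$ together with the classical de Leeuw--Glicksberg mean-ergodic theorem for weakly almost periodic functions: since $m(f_0)=0$, the WAP function $f_0$ lies entirely in the flight component, and consequently $0$ belongs to the weak closure of the orbit $\{g\cdot f_0 : g \in G\}$. Mazur's theorem then converts this into convex combinations $\sum_i \lambda_i^{(k)}\, g_i^{(k)}\cdot f_0$ converging to $0$ in the norm of $B(G)$ as $k\to\infty$.

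The endgame exploits left-invariance of $\varphi$: for every such convex combination one computes
\begin{equation*}
\varphi\!\left(\sum_i \lambda_i^{(k)}\, g_i^{(k)}\cdot f_0\right) = \sum_i \lambda_i^{(k)}\, \varphi(g_i^{(k)}\cdot f_0) = \varphi(f_0),
\end{equation*}
because the coefficients sum to one. Since the argument on the left-hand side has arbitrarily small norm, continuity of $\varphi$ forces $\varphi(f_0)=0$ and hence $\varphi=m$. The step I expect to be the main obstacle is precisely this continuity point: the lemma is stated for an arbitrary linear functional, whereas my argument manifestly uses boundedness. This is not a serious issue in practice, since the lemma is only ever applied to the functional produced in Proposition \ref{Prop:jedynka}, which was shown there to be continuous of norm one; in effect one should read the hypothesis as \emph{continuous} linear functional, matching the convention of \cite{ll}.
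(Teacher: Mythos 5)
Your argument takes a genuinely different route from the paper's. The paper proves the lemma purely structurally: $\ker\varphi$ is a $G$-invariant subspace, hence of the form $z\cdot B(G)$ for a central projection $z\in W^{\ast}(G)$; since $\varphi(\mathds{1})=1$, the minimal projection $z_{\mathrm{triv}}$ cannot lie under $z$, so $z\leq 1-z_{\mathrm{triv}}$, and the codimension-one constraint forces equality — the kernel is pinned down, and normalisation at $\mathds{1}$ finishes the proof. You instead use ergodic averaging. That strategy can be made to work and is arguably more transparent, but as written it has two gaps beyond the continuity caveat you already flagged (which, incidentally, the paper's proof shares implicitly: a kernel must be closed for the invariant-subspace proposition to apply to it).

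The first gap is a topology mismatch. Eberlein's theorem and the de Leeuw--Glicksberg theory live in $(C_b(G),\|\cdot\|_{\infty})$, so they produce convex combinations of translates of $f_0$ converging to $0$ \emph{uniformly}. But $\varphi$ is continuous only for the $B(G)$-norm, which dominates the sup norm, so uniform convergence gives you nothing. You need the averages to converge in $\|\cdot\|_{B(G)}$, and for that you must use the relative \emph{weak} compactness of the orbit $\{g\cdot f_0\}$ inside $B(G)$ itself (Graham--Lau \cite{gl}, already in the bibliography), followed by Ryll-Nardzewski/mean-ergodic and Mazur applied in $B(G)$. Relatedly, the claim that $m(f_0)=0$ puts $f_0$ "entirely in the flight component" is false: the character $x\mapsto e^{ix}$ on $\mathbb{R}$ lies in $B(\mathbb{R})$, is almost periodic with mean zero, and its orbit is a norm-compact circle not containing $0$, so $0$ is not in the weak closure of the \emph{orbit}. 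What is true, and what you actually need, is that $0$ lies in the closed \emph{convex hull} of the orbit. The second gap is a circularity: you write $f=m(f)\mathds{1}+f_0$ and identify $\mathds{1}^{\perp}$ with $\ker m$ "by the previous proposition," but the identification $c=m(f)$ in Proposition \ref{Prop:jedynka} is precisely what this lemma is invoked to establish. This is cosmetic — run the averaging argument on $f$ itself (the averages converge in $B(G)$-norm to $m(f)\mathds{1}$ while $\varphi$ is constantly $\varphi(f)$ on them, giving $\varphi(f)=m(f)$ directly) and the circle disappears.
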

\begin{proof}
It suffices to show that there is at most one functional $\varphi$ satisfying the assumptions of the lemma. Recall that in order to show that two such functional are equal, it suffices to show that their kernels coincide; this will force them to be proportional and the condition $\varphi(\mathds{1})=1$ will finish the proof. Therefore we have to check that the kernel of $\varphi$ is uniquely specified by the conditions spelled out in the lemma. Note that $\mathrm{ker}\varphi$ is a $G$-invariant subspace, so it is of the form $z\cdot B(G)$ for some central projection in $W^{\ast}(G)$. Let $z_{\mathrm{triv}}$ be the central projection such that $z_{\mathrm{triv}} B(G) = \mathbb{C} \mathds{1}$. We will show that $z = 1 - z_{\mathrm{triv}}$. Since $\varphi(\mathds{1})=1$, it cannot be the case that $z_{\mathrm{triv}} \leq z$. As $z_{\mathrm{triv}}$ is a minimal projection, it follows that $z \leq 1 - z_{\mathrm{triv}}$. If the inequality was strict, then the codimension of the kernel would be greater than $1$, which cannot happen for a linear funcitonal, hence $z = 1 - z_{\mathrm{triv}}$.
\end{proof}
With these tools we are able to conclude.
\begin{prop}\label{jprzez}
Let $f \in B(G)$ satisfy $\|f\|\leqslant 1$ and $|f| \geqslant \delta$, where $\delta>\frac{1}{\sqrt{2}}$. Then $f$ is invertible and $\|\frac{1}{f}\| \leqslant \frac{1}{2\delta^2 - 1}$.
\end{prop}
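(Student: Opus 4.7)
The plan mirrors the measure-algebra argument for $\delta>\tfrac{1}{\sqrt{2}}$ recalled in the Introduction. That proof goes via $\mu\ast\widetilde{\mu}$, whose atom at the identity is bounded below by $\delta^{2}$; the analogue in $B(G)$ is the pointwise product $|f|^{2}=f\overline{f}$, whose ``constant'' (almost periodic) component will be detected by the invariant mean $m$. The orthogonal decomposition of Proposition~\ref{Prop:jedynka} applied to $|f|^{2}$ plays the role of the $M_{d}\oplus M_{c}$ splitting, and the final identity $f^{-1}=\overline{f}\cdot(|f|^{2})^{-1}$ mirrors $\mu^{-1}=\widetilde{\mu}\ast(\mu\ast\widetilde{\mu})^{-1}$.

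Concretely, first I would form $|f|^{2}=f\cdot\overline{f}\in B(G)$ (using that $B(G)$ is closed under pointwise product and complex conjugation), noting $\||f|^{2}\|\leq\|f\|^{2}\leq 1$ and that pointwise $|f|^{2}\geq\delta^{2}$. Applying Proposition~\ref{Prop:jedynka} to $|f|^{2}$ yields an orthogonal decomposition
\[
|f|^{2} \;=\; c\mathds{1}+g_{0}, \qquad c=m(|f|^{2}),
\]
with $\mathds{1}$ and $g_{0}$ mutually singular, so that $\||f|^{2}\|=|c|+\|g_{0}\|$. Since the invariant mean on $WAP(G)$ is positive on pointwise non-negative elements, the inequality $|f|^{2}\geq\delta^{2}\mathds{1}$ gives $c\geq\delta^{2}>\tfrac{1}{2}$ and therefore $\|g_{0}\|\leq 1-c<c$. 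With $\|g_{0}/c\|<1$ the Neumann expansion of $(c\mathds{1}+g_{0})^{-1}$ converges in $B(G)$, producing
\[
\|(|f|^{2})^{-1}\|\;\leq\;\frac{1}{c-\|g_{0}\|}\;\leq\;\frac{1}{2c-1}\;\leq\;\frac{1}{2\delta^{2}-1}.
\]
Then $f^{-1}=\overline{f}\cdot(|f|^{2})^{-1}\in B(G)$, and using $\|\overline{f}\|=\|f\|\leq 1$ I obtain the desired bound $\|f^{-1}\|\leq (2\delta^{2}-1)^{-1}$.

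The single non-routine ingredient is the step $c\geq\delta^{2}$: Proposition~\ref{Prop:jedynka} only identifies $c$ as the value $\varphi(|f|^{2})$ of the unique $G$-invariant linear functional normalized by $\varphi(\mathds{1})=1$, so to upgrade the pointwise estimate $|f|^{2}\geq\delta^{2}$ into a bound on $c$ one must invoke the standard positivity of the invariant mean on $WAP(G)$ (e.g.\ from \cite{b}). Modulo this, the argument is a direct transcription of Nikolski's proof into the Fourier--Stieltjes setting.
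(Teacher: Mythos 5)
Your argument is correct and is essentially the paper's own proof: both pass to $|f|^{2}=f\overline{f}$, apply Proposition \ref{Prop:jedynka} to split off the constant $c=m(|f|^{2})\geq\delta^{2}$, invert via the Neumann series using $\|f_{0}\|\leq 1-\delta^{2}$, and conclude with $\tfrac{1}{f}=\overline{f}\cdot(|f|^{2})^{-1}$. The only cosmetic difference is how the bound $m(|f|^{2})\geq\delta^{2}$ is justified --- you invoke positivity of the invariant mean on $WAP(G)$, while the paper uses its characterization as the unique constant approximable by convex combinations of translates; both are standard and adequate.
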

\begin{proof}
As $|f|\geqslant \delta$, $|f|^2 \geqslant \delta^2>\frac{1}{2}$. By Proposition \ref{Prop:jedynka} we get $|f|^2 = m(|f|^2) \mathds{1} + f_0$. Since the mean $m(f)$ is characterised as being the unique constant that can be uniformly approximated with convex combinations of translates of $f$, we get $m(|f|^2)\geq \delta^2> \frac{1}{2}$. As $\| |f|^2\| \leqslant 1$, we get that $\|f_0\| \leq 1-\delta^2< \frac{1}{2}$. This means that we can write the inverse of $|f|^2$ using the Neumann series (just like in Lemma \ref{latw}) and quickly obtain the estimate for the norm of the inverse by $\frac{1}{2\delta^2-1}$; it just remains to note that $\frac{1}{f} = \frac{\overline{f}}{|f|^2}$.
\end{proof}
The proofs contained in this note clearly suggest that the most important information concerning invertibility is contained in the discrete (or almost periodic) part. In the non-commutative setting it is possible that the almost periodic part is very simple. Namely, there is a class of groups, called \textbf{minimally almost periodic}, for which constants are the only almost periodic functions; an example of such a group is $SL_2(\mathbb{R})$. For them we get a better result.
\begin{prop}
Let $G$ be a minimally almost periodic group and let $f\in B(G)$ satisfy $\|f\|\leqslant 1$ and $|f|\geq \delta > \frac{1}{2}$. Then $f$ is invertible and $\|\frac{1}{f}\| \leq \frac{1}{2\delta-1}$.
\end{prop}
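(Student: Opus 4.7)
My plan is to combine Proposition~\ref{Prop:jedynka} with the minimal almost periodicity hypothesis to push the entire non-constant part of $f$ into the ideal $B_{c}(G)$, and then invert by a Neumann series in the spirit of Lemma~\ref{latw}. Concretely, Proposition~\ref{Prop:jedynka} furnishes an orthogonal decomposition $f = c\mathds{1} + f_{0}$ with $c = m(f)$, orthogonality being equivalent to the additive norm identity $\|f\| = |c| + \|f_{0}\|$ used in its proof. Under minimal almost periodicity one has $B(G) \cap AP(G) = \mathbb{C}\mathds{1}$, so the standard splitting $B(G) = B_{c}(G) \oplus (B(G) \cap AP(G))$ recalled in the introduction becomes $B(G) = B_{c}(G) \oplus \mathbb{C}\mathds{1}$; both decompositions are implemented by the projection $g \mapsto m(g)\mathds{1}$, so we get $f_{0} \in B_{c}(G)$.

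Next I would show that $|c| \geq \delta$. Since $f_{0} \in B_{c}(G)$ means $M(|f_{0}|) = 0$ and $M$ is a positive mean, it follows -- exactly as in the proof of the theorem preceding Theorem~\ref{fsa} -- that $\inf_{x \in G} |f_{0}(x)| = 0$. Choosing a sequence $(x_{n}) \subset G$ with $f_{0}(x_{n}) \to 0$, the hypothesis $|f(x_{n})| \geq \delta$ applied to $f(x_{n}) = c + f_{0}(x_{n})$ yields $|c| \geq \delta$ in the limit.

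Combining $|c| \geq \delta > \frac{1}{2}$ with $\|f\| = |c| + \|f_{0}\| \leq 1$ gives $\|f_{0}\| \leq 1 - |c| \leq 1 - \delta < |c|$, so the Neumann series for $(1 + c^{-1} f_{0})^{-1}$ converges in $B(G)$ and produces
\[
\|f^{-1}\| \leq \frac{1}{|c| - \|f_{0}\|} \leq \frac{1}{2|c| - 1} \leq \frac{1}{2\delta - 1},
\]
which is the bound claimed. The only delicate point is identifying the orthogonal complement of $\mathbb{C}\mathds{1}$ (coming from the $L$-space formalism of \cite{ow}) with $B_{c}(G)$ in the minimally almost periodic case; once that is secured, the remainder of the argument is a direct transplantation of Lemma~\ref{latw} from $M(G)$ to $B(G)$.
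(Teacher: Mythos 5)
Your proof is correct and follows essentially the same route as the paper: decompose $f = m(f)\mathds{1} + f_{0}$, use minimal almost periodicity to identify the almost periodic part with the constant $m(f)\mathds{1}$ and place $f_{0}$ in $B_{c}(G)$, show $|m(f)|\geq\delta$, and invert via the Neumann series using the additivity of the norm in the orthogonal decomposition. The only (harmless) variation is that you derive $|m(f)|\geq\delta$ by evaluating $f$ along a sequence on which $f_{0}$ tends to zero, whereas the paper gets it directly from positivity and invariance of the mean via $|m(f)| = m(|f|) \geq \delta$.
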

\begin{proof}
Let $f = f_{ap} + f_c$ be a decomposition of $f$ into almost periodic and continuous parts, and recall that $m(|f_c|)=0$. It follows that $m(|f|) = m(|f_{ap}|)$. As the only almost periodic functions are the constants, we get $f_{ap} = m(f)\mathds{1}$, hence $|m(f)| = m(|f|)\geq \delta$. We have an orthogonal decomposition $f = m(f)\mathds{1} + f_{c}$, where $|m(f)|\geq \delta > \frac{1}{2}$, so $\|f_{c}\|\leq 1-\delta < \frac{1}{2}$ and the usual procedure involving the Neumann series proves invertibility and provides the estimate.
\end{proof}
There are two more simple cases in which we get the same estimate for the inverse; this time there are very strong assumptions about the function.
\begin{prop}
Let $G$ be a locally compact group and let $f\in B(G)$ satisfy $\|f\|\leq 1$ and $|f|\geqslant \delta > \frac{1}{2}$. Then $f$ is invertible and $\|\frac{1}{f}\|\leqslant \frac{1}{2\delta-1}$ if one of the following conditions is satisfied:
\begin{enumerate}[{\normalfont (i)}]
\item $f$ is real and has constant sign;
\item the absolute value of $f$ is constant.
\end{enumerate}
\end{prop}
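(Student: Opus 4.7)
My plan is to treat the two cases separately, in both instances reducing the problem either to the orthogonal decomposition provided by Proposition \ref{Prop:jedynka} or to a direct pointwise calculation. Throughout I will use that $\overline{f} \in B(G)$ with $\|\overline{f}\|_{B(G)} = \|f\|_{B(G)}$, and that the invariant mean $m$ on $WAP(G)$ is a positive linear functional with $m(\mathds{1}) = 1$ (positivity being implicit in the identification $B_c(G) = \{f : m(|f|) = 0\}$ used throughout the paper).

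For case (i), after replacing $f$ by $-f$ if necessary (an operation that preserves all hypotheses as well as $\|1/f\|$), I may assume $f \geq \delta > \frac{1}{2}$ pointwise. Invoking Proposition \ref{Prop:jedynka}, I would write $f = m(f)\mathds{1} + f_0$ with the decomposition orthogonal, so that in particular $\|f\| = |m(f)| + \|f_0\|$. Positivity of $m$ combined with the pointwise inequality $f \geq \delta\mathds{1}$ yields $m(f) \geq \delta > \frac{1}{2}$, and orthogonality then gives $\|f_0\| \leq \|f\| - m(f) \leq 1 - m(f) < m(f)$. The Neumann series applied to $f = m(f)\bigl(\mathds{1} + f_0/m(f)\bigr)$ produces the inverse and bounds its norm by
\[
\left\|\frac{1}{f}\right\| \leq \frac{1}{m(f) - \|f_0\|} \leq \frac{1}{2m(f) - \|f\|} \leq \frac{1}{2\delta - 1}.
\]

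For case (ii), the hypothesis that $|f|$ is constant means there exists $c \geq \delta$ with $|f(x)| = c$ for every $x \in G$, and hence $f \cdot \overline{f} = c^2\mathds{1}$ as elements of $B(G)$. Thus $1/f = \overline{f}/c^2$ is already visibly in $B(G)$, and
\[
\left\|\frac{1}{f}\right\| = \frac{\|\overline{f}\|}{c^2} = \frac{\|f\|}{c^2} \leq \frac{1}{\delta^2} \leq \frac{1}{2\delta - 1},
\]
the last inequality being equivalent to $(\delta-1)^2 \geq 0$. I do not anticipate any serious obstacle: both arguments reduce to one-line computations once Proposition \ref{Prop:jedynka} and the standard properties of the invariant mean are in hand. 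The only point worth double-checking is that pointwise positivity of $f$ transfers to $m(f) \geq \delta$, which follows from positivity of the mean on $WAP(G)$ — precisely the input already invoked when defining $B_c(G)$.
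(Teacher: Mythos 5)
Your proposal is correct and follows essentially the same route as the paper's proof: in case (i) you reduce to $f\geq\delta$, apply the orthogonal decomposition $f=m(f)\mathds{1}+f_0$ from Proposition \ref{Prop:jedynka}, and invert via the Neumann series; in case (ii) you observe $f\overline{f}$ is a constant $\geq\delta^2$ and read off the inverse directly. The extra details you supply (the explicit chain $\|1/f\|\leq\frac{1}{m(f)-\|f_0\|}\leq\frac{1}{2m(f)-\|f\|}$ and the remark that $1/\delta^2\leq 1/(2\delta-1)$ amounts to $(\delta-1)^2\geq 0$) are accurate elaborations of what the paper leaves implicit.
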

\begin{proof}
\begin{enumerate}[(i)]
\item We may assume that $f$ is positive. Recall the decomposition from Proposition \ref{Prop:jedynka}: $f=m(f)\mathds{1} + f_0$. Since $f\geqslant \delta$, we get $m(f)\geqslant \delta$, so $f = m(f)\left(\mathds{1} + \frac{f_0}{m(f)}\right)$ can be inverted using the Neumann series, which also gives the estimate.

\item In this case we have $\overline{f} f = c \geqslant \delta^2$, hence $\frac{1}{f} = \frac{\overline{f}}{c}$, which gives $\|\frac{1}{f}\| \leqslant \frac{1}{\delta^2} \leqslant \frac{1}{2\delta-1}$. Note that here the assumption $\delta>\frac{1}{2}$ is not necessary.
\end{enumerate}

\end{proof}
\section{Concluding remarks}
\begin{enumerate}
  \item The critical constant $\delta_{0}=\frac{1}{2}$ is the same as for measure algebras on `analytic' semigroups as discussed in part 2.2. of \cite{nik}.
  \item The problem of obtaining an estimate of the inverse for measures $\mu\in M(G)$ satisfying $\|\mu\|\leq 1$ and $\inf_{\gamma\in\widehat{G}}|\widehat{\mu}(\gamma)|\in\left(\frac{1}{2},\frac{1}{\sqrt{2}}\right)$ seems to be very difficult even for $G=\mathbb{T}$ and the considered measures being finite sums of Dirac deltas supported at rational points of the circle. Then the estimation of the inverse is translated to an elementary but surprisingly hard problem on bounding the entries of the inverses of circulant matrices in terms of their eigenvalues. It should be also noted that it cannot be solved by simply proving that the mass of a single atom is greater than $\delta$ as the example $\mu=\frac{1}{2}\delta_{0}+\frac{i}{2}\delta_{1}\in M(\mathbb{Z}_{2})$ shows (such examples can be also constructed for non-torsion groups such as $\mathbb{Z}$).
  \item Theorem \ref{glop} is sharp for any non-discrete locally compact Abelian group as it was discussed before but Theorem \ref{fsa} is not necessarily
   sharp even for non-compact groups as there are examples of non-compact groups for which the Wiener-Pitt phenomenon does not occur (check \cite{c}). However, this Theorem is sharp for discrete groups containing an infinite Abelian subgroup (consult \cite{ow})
\end{enumerate}
\section{Acknowledgements}
The authors would like to thank Maria Roginskaya and Michał Wojciechowski for many valuable discussions. They would also like to thank Nikolai Nikolski for careful reading of the manuscript and useful remarks. Parts of this work were completed during a visit of Mateusz Wasilewski to Chalmers University of Technology; he thanks the university and the host for a stimulating research environment.


\begin{thebibliography}{999999}
\normalsize
\baselineskip=17pt
\bibitem [B]{b} R. B. Burckel: \textit{Weakly almost periodic functions on semigroups}, Gordon and Breach, New York, 1970
\bibitem [C]{c} M. Cowling: \textit{The Fourier-Stieltjes algebra of a semisimple group}, Colloq. Math., vol. 41, no. 1, pp. 89--94, 1979.
\bibitem [E]{e} W. F. Eberlein: \textit{Abstract ergodic theorems and weak almost periodic functions}, Transactions of the Amer. Math. Soc., vol. 67, pp. 217--240, 1949.
\bibitem [Ey]{ey} P. Eymard: \textit{L'algebre de Fourier d'un groupe localement compact}, Bull. Soc. Math. France, vol. 92, pp. 181--236, 1964.
\bibitem [Ex]{ex} R.~Exel: \textit{The F. and M. Riesz theorem for $C^{\ast}$-algebras}, J. Operator Theory, vol. 23, no.2, pp. 351--368, 1990.
\bibitem [GW]{gw} I. Glicksberg, I. Wik: \textit{The range of Fourier-Stieltjes transforms of parts of a measures},  Lecture Notes in Mathematics, vol. 266, pp
73--77, 1971.
\bibitem [GL]{gl} Colin C. Graham,  Anthony T. M. Lau: \textit{Relative weak compactness of orbits in Banach spaces associated with locally compact groups}, Trans. Amer. Math. Soc., vol.359, no. 3, pp. 1129--1160, 2007.
\bibitem [KL]{kl} E. Kaniuth, Anthony T. M. Lau: \textit{Fourier and Fourier-Stieltjes algebras on Locally Compact Abelian Groups}, AMS 2018.
\bibitem [LL]{ll} Anthony T.~M.~Lau, J. Ludwig: \textit{Fourier-Stieltjes algebra of a topological group}, Adv. Math., vol. 229, no. 3, p. 2000--2023, 2012.
\bibitem [N]{nik} N. Nikolski: \textit{In search of the invisible spectrum}, Ann. Inst. Fourier (Grenoble), vol. 49, no. 6, pp. 1925--1998, 1999.
\bibitem [OW]{ow} P. Ohrysko, M. Wasilewski: \textit{Spectral theory of Fourier-Stieltjes algebras}, J. Math. Anal. Appl., vol. 473, no. 1, pp. 174--200, 2019.
\bibitem [R]{r} W. Rudin: \textit{Fourier analysis on groups}, Wiley Classics Library, 1990.
\bibitem [\.{Z}]{z} W. \.{Z}elazko: \textit{Banach algebras}, Elsevier Science Ltd and PWN, February 1973.
\end{thebibliography}
\end{document}